\DeclareMathOperator{\Tr}{Tr}
\newcommand{\C}{\mathbb{C}}
\newcommand{\N}{\mathbb{N}}
\newcommand{\Z}{\mathbb{Z}}
\newcommand{\Q}{\mathbb{Q}}
\newcommand{\R}{\mathbb{R}}
\newcommand{\F}{\mathbb{F}}
\newcommand{\E}{\mathbb{E}}
\newcommand{\ve}{\varepsilon}
\newcommand{\1}{^{-1}}
\newcommand{\f}[2]{\frac{#1}{#2}}
\newtheorem{theorem}{Theorem}[section]
\newtheorem{corollary}[theorem]{Corollary}
\newtheorem{proposition}[theorem]{Proposition}
\newtheorem{lemma}[theorem]{Lemma}
\numberwithin{theorem}{section}
\title{On the polynomial Szemer\'edi theorem in finite fields}
\author{Sarah Peluse}
\address{Department of Mathematics, Stanford University, Stanford, California 94305}
\email{speluse@stanford.edu}
\begin{document}

\begin{abstract}
Let $P_1,\dots,P_m\in\Z[y]$ be any linearly independent polynomials with zero constant term. We show that there exists $\gamma>0$ such that any subset of $\F_q$ of size at least $q^{1-\gamma}$ contains a nontrivial polynomial progression $x,x+P_1(y),\dots,x+P_m(y)$, provided the characteristic of $\F_q$ is large enough.
\end{abstract}

\maketitle

\section{Introduction}
For $P_1,\dots,P_m\in\Z[y]$ and $S$ equal to either $[N]:=\{1,\dots,N\}$ or a finite field $\F_q$, define $r_{P_1,\dots,P_m}(S)$ to be the size of the largest subset of $S$ that does not contain a progression of the form $x,x+P_1(y),\dots,x+P_m(y)$ with $y\neq 0$. Szemer\'edi's Theorem~\cite{Sz} states that
\begin{equation}\label{szemeredi}
r_{y,2y,\dots,(k-1)y}([N])=o_{k}(N),
\end{equation}
which is equivalent (by a standard compactness argument) to saying that any subset of the integers of positive upper density contains a nontrivial (i.e. with common difference nonzero) $k$-term arithmetic progression $x,x+y,\dots,x+(k-1)y$.

The bound in~(\ref{szemeredi}) does not hold when $y,\dots,(k-1)y$ are replaced by arbitrary integer polynomials. For example, the set $3\N$ contains no progression of the form $x,x+(y^2+1)$, since $y^2+1$ is never divisible by $3$ when $y$ is an integer. However, if we remove the possibility of local obstructions by requiring that $P_1(0)=\dots=P_m(0)=0$, then such a bound does hold. Bergelson and Leibman~\cite{BL} proved that if $P_1(0)=\dots=P_m(0)=0$, then
\[
r_{P_1,\dots,P_m}([N])=o_{P_1,\dots,P_m}(N).
\]

While Gowers~\cite{G1}~\cite{G2} has shown that
\[
r_{y,2y,\dots,(k-1)y}([N])\ll_k\f{N}{(\log\log{N})^{c_k}}
\]
for all $k$, no quantitative bounds are known for the $o_{P_1,\dots,P_m}(N)$ term in Bergelson and Leibman's theorem in general. Aside from when $P_1,\dots,P_m$ are linear, quantitative bounds are known in only two other special cases: when $m=1$ by work of S\'ark\"ozy~\cite{S}~\cite{S2}, Balog, Pelik\'an, Pintz, and Szemer\'edi~\cite{BPPS}, Slijecp\u{c}evi\'c~\cite{Sl}, and Lucier~\cite{L}, and when $P_1,\dots,P_m$ are all homogeneous of the same degree by work of Prendiville~\cite{Pr}. 

Clearly any bounds for $r_{P_1,\dots,P_m}([p])$ automatically hold for $r_{P_1,\dots,P_m}(\F_p)$, but we know even more than this in the finite field setting. Bourgain and Chang~\cite{BC} were the first to consider the problem of bounding $r_{P_1,\dots,P_m}(\F_q)$. They showed that
\[
r_{y,y^2}(\F_p)\ll p^{1-1/15},
\]
and, further, that
\begin{equation}\label{bc}
\#\{(x,y)\in\F_p^2:x,x+y,x+y^2\in A\}=\f{|A|^3}{p}+O(|A|^{3/2}p^{2/5}).
\end{equation}
for any $A\subset\F_p$. Thus, any subset of $\F_p$ of density at least $p^{-1/15+\ve}$ contains very close to the expected number of progressions $x,x+y,x+y^2$ in a random set of the same density.

Bourgain and Chang's proof was quite specific to the progression $x,x+y,x+y^2$, and relied on the explicit evaluation of quadratic Gauss sums. Using a different argument, the author showed in~\cite{P} that a result like Bourgain and Chang's holds when $y$ and $y^2$ are replaced by any two linearly independent polynomials $P_1$ and $P_2$ with $P_1(0)=P_2(0)=0$. The main result of~\cite{P} is that
\begin{equation}\label{p1p2}
\#\{(x,y)\in\F_q^2:x,x+P_1(y),x+P_2(y)\in A\}=\f{|A|^3}{q}+O_{P_1,P_2}(|A|^{3/2}q^{7/16})
\end{equation}
for any $A\subset\F_q$ whenever the characteristic of $\F_q$ is sufficiently large, so that $r_{P_1,P_2}(\F_q)\ll_{P_1,P_2}q^{1-1/24}$.

Note that the exponent of $q$ in the error term of~(\ref{p1p2}) is larger than in~(\ref{bc}), so that the argument in~\cite{P} does not quantitatively recover the result in~\cite{BC}. However, this exponent of $q$ does not depend at all on $P_1$ or $P_2$, so the bound $r_{P_1,P_2}(\F_q)\ll_{P_1,P_2}q^{1-1/24}$ is stronger than what can possibly hold in the integer setting when at least one of $P_1$ or $P_2$ has degree at least $25$.

Dong, Li, and Sawin~\cite{DLS} later improved the error term in~(\ref{p1p2}), showing that
\begin{equation}\label{p1p22}
\#\{(x,y)\in\F_q^2:x,x+P_1(y),x+P_2(y)\in A\}=\f{|A|^3}{q}+O_{P_1,P_2}(|A|^{3/2}q^{3/8})
\end{equation}
whenever $A\subset \F_q$ and the characteristic of $\F_q$ is sufficiently large. This also improves on the error term in Bourgain and Chang's result.

The arguments in~\cite{BC},~\cite{P}, and~\cite{DLS} break down when one tries to use them to study progressions of length longer than three. Currently no results are known for progressions of length at least four when $P_1,\dots,P_m$ are not all of the special form $P_i(y)=a_iy^k$ for a fixed $k\in\N$, which is covered by Prendiville's work~\cite{Pr}.

In this paper, we prove a power-saving bound for $r_{P_1,\dots,P_m}(\F_q)$ for any $P_1,\dots,P_m\in\Z[y]$ that are linearly independent and satisfy $P_1(0)=\dots=P_m(0)=0$, provided the characteristic of $\F_q$ is large enough. Let $\Z[y]_0$ denote the subset of $\Z[y]$ consisting of polynomials with zero constant term.
\begin{theorem}\label{main2}
Let $P_1,\dots,P_m\in\Z[y]_0$ be linearly independent over $\Q$. There exist $c,\gamma>0$ such that if the characteristic of $\F_q$ is at least $c$, then
\[
r_{P_1,\dots,P_m}(\F_q)\ll_{P_1,\dots,P_m}q^{1-\gamma}
\]
and
\[
\#\{(x,y)\in\F_q:x,x+P_1(y),\dots,x+P_m(y)\in A\}=\f{|A|^{m+1}}{q^{m-1}}+O_{P_1,\dots,P_m}(q^{2-(m+1)\gamma})
\]
for every $A\subset\F_q$.
\end{theorem}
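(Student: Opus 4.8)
The plan is to prove this by an inductive "degree-lowering" argument on the complexity of the polynomial configuration, using the circle method / Fourier analysis on $\F_q$ as the basic tool, combined with $\ell^2$-style concatenation of the Gowers box norms that arise. Write $f_A = 1_A - |A|/q$ for the balanced function, and let $M(A)$ denote the count in the theorem. Expanding $1_A = |A|/q + f_A$ in each coordinate, the main term $|A|^{m+1}/q^{m-1}$ splits off, and the task is to bound the error terms. By symmetry and a telescoping it suffices to control the "von Neumann" expression in which one of the coordinates carries $f_A$ and the others carry arbitrary $1$-bounded functions; that is, to show
\[
\Bigl|\sum_{x,y\in\F_q} g_0(x)\,g_1(x+P_1(y))\cdots g_m(x+P_m(y))\Bigr| \ll_{P_1,\dots,P_m} q^2\,\|f_A\|_{U}^{\delta}
\]
for each factor in which $g_i = f_A$, where $\|\cdot\|_U$ is an appropriate Gowers-type norm of bounded degree and $\delta>0$ depends only on $m$ and the degrees. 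This is the step where the input from \cite{P} and \cite{DLS} gets generalized: their arguments handle exactly the $m=2$ case of such a von Neumann inequality, and the goal is to bootstrap to longer progressions.

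The heart of the argument is a sequence of van der Corput (Cauchy–Schwarz in $y$, shifting $y \mapsto y+h$) applications that successively eliminate the factors $g_i$ and replace the original configuration by a "lower-complexity" one involving the difference polynomials $P_i(y+h) - P_i(y)$, $P_i(y) - P_j(y)$, etc., at the cost of averaging over new parameters $h$. After finitely many such steps — the number and the bookkeeping being governed by a PET-induction-type weight function on the tuple $(P_1,\dots,P_m)$ of the sort used by Bergelson–Leibman — one reaches a configuration controlled by a genuine additive-combinatorial quantity, namely an average of exponential sums $\sum_x e_q(\alpha Q(x))$ for polynomials $Q$ of bounded degree, to which Weyl-type bounds in $\F_q$ (valid once $\Char\F_q$ exceeds the degrees) apply. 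Tracking the Cauchy–Schwarz losses gives a bound of the shape $q^2 \cdot q^{-\gamma'}$ on the von Neumann expression whenever $\|f_A\|_\infty$-type structure is absent; more precisely one obtains control by a Gowers $U^s$-norm of $f_A$, and then a standard inverse-theorem-free $\ell^p$ argument (or direct Fourier expansion, since on $\F_q$ the $U^2$ norm already has a clean Fourier description) converts smallness of that norm into the desired power saving, while largeness of the norm forces $A$ to correlate with a low-complexity polynomial phase, which is then removed by a density-increment or a direct computation exploiting $P_i(0)=0$ and linear independence.

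The main obstacle — and the reason this goes beyond \cite{P}, \cite{DLS} — is controlling the complexity blow-up: each van der Corput step multiplies the number of linear forms and can raise the effective degree, and one must ensure that the PET-type induction actually terminates at a configuration whose controlling exponential sums are \emph{nondegenerate}, i.e. that the polynomials appearing do not collapse to something with a large zero set mod $p$. This is exactly where linear independence of $P_1,\dots,P_m$ over $\Q$ and the hypothesis $\Char\F_q \geq c$ are used: linear independence guarantees that the leading-term data needed to run PET induction survives reduction mod $p$ (no spurious relations among the $P_i$ appear for $p$ large), and $\Char\F_q$ large keeps all the Weyl sums in the usual range where the square-root cancellation bound $|\sum_x e_q(\alpha Q(x))| \ll_{\deg Q} q^{1/2}$ holds. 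Making the dependence of $\gamma$ on $m$ and $\deg P_i$ explicit through the induction is bookkeeping-heavy but routine; the conceptual content is the termination-with-nondegeneracy claim. Finally, assembling the von Neumann inequality for each of the $m+1$ coordinates and summing yields $M(A) = |A|^{m+1}/q^{m-1} + O(q^{2-(m+1)\gamma})$ after optimizing $\gamma$, and the bound on $r_{P_1,\dots,P_m}(\F_q)$ follows by noting that if $|A| \geq q^{1-\gamma}$ then the main term dominates the error, forcing a nontrivial progression.
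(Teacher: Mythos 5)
Your reduction to a von Neumann-type inequality and your use of PET induction to obtain control of the counting operator by a Gowers $U^s$-norm both match the first steps of the paper (this is Proposition~\ref{PET1}/\ref{PET2}). The gap comes immediately afterwards, at the single most important point: how to convert $U^s$-norm control, for the large $s$ that PET actually produces, into a power saving in $q$. Your proposal handles this by saying that smallness of the norm is converted by ``a standard inverse-theorem-free $\ell^p$ argument (or direct Fourier expansion, since the $U^2$ norm has a clean Fourier description)'' while largeness ``forces $A$ to correlate with a low-complexity polynomial phase, which is then removed by a density-increment or a direct computation.'' This does not work. The Fourier description is only available for $U^2$; for $s\geq 3$ there is no elementary Fourier or $\ell^p$ argument, and the route through correlation with polynomial phases is precisely an appeal to a $U^s$-inverse theorem. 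Known inverse theorems in this setting carry no power-saving (indeed no polynomial) bounds, so they cannot yield the $q^{-\gamma}$ error term claimed in the theorem; moreover a density increment has no natural meaning for subsets of the whole field $\F_q$ and in any case would not produce the asymptotic counting statement. So the heart of the theorem --- obtaining a polynomially strong bound from $U^s$-control without any inverse theorem --- is exactly the part your proposal leaves unproved. (A secondary inaccuracy: after PET one does not face a nondegeneracy question for Weyl sums $\sum_x e_q(\alpha Q(x))$; linear independence and large characteristic are used elsewhere, chiefly in the Weil-bound base case and in keeping the auxiliary polynomial families independent mod $p$.)

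What the paper does instead, and what is missing from your outline, is a degree-lowering induction on the number of polynomials. One must work with the more general averages $\Lambda_{P_1,\dots,P_{m_1}}^{Q_1,\dots,Q_{m_2}}(F;\Psi)$ that carry extra character factors $\prod_j\psi_j(Q_j(y))$, because these arise unavoidably in the induction. Given the theorem for $(m_1-1,m_2+1)$, one decomposes $f_0=f_a+f_b+f_c$ via a Hahn--Banach argument (Proposition~\ref{hb}), with $\|f_a\|_{U^s}^*$ and $\|f_c\|_{L^\infty}$ controlled and $\|f_b\|_{L^1}$, $\|f_c\|_{U^s}$ small; the $f_a$ term is $\langle f_a,\overline g\rangle$ for the dual function $g(x)=\E_y\prod_i f_i(x+P_i(y))\prod_j\psi_j(Q_j(y))$, and the $U^s$-norm of $g$ is bounded, via the Cauchy--Schwarz lemma (Lemma~\ref{cs}), by averages of $U^2$-norms of differenced dual functions, whose Fourier coefficients are themselves counting operators for $m_1-1$ polynomials twisted by characters --- to which the inductive hypothesis applies. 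This converts a $U^s$ bound into a $U^{s-1}$ bound (Lemma~\ref{ind}), and iterating down to $U^1$ gives the power saving, with the $m_1=1$ base case supplied by the Weil bound. Without some substitute for this mechanism, the step from $U^s$-control to a power-saving error term in your proposal is a genuine missing idea, not a routine bookkeeping matter.
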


While the power saving exponent of $q$ in~(\ref{p1p2}) and~(\ref{p1p22}) is independent of the polynomials $P_1$ and $P_2$, the power saving exponent in Theorem~\ref{main2} depends on $P_1,\dots,P_m$. The dependence is extremely poor, so we do not keep track of it. We also remark that while the earlier papers~\cite{P} and~\cite{DLS} both rely on a decent amount of algebraic geometry machinery, the proof of Theorem~\ref{main2} only requires the Weil bound for curves.

We now briefly describe the proof of Theorem~\ref{main2}. Note that if we can bound the average
\[
\Lambda_{P_1,\dots,P_m}(f_0,\dots,f_m):=\E_{x,y\in\F_q}f_0(x)f_1(x+P_1(y))\cdots f_m(x+P_m(y))
\]
by a negative power of $q$ whenever $\|f_i\|_\infty\leq 1$ for $i=0,\dots,m$ and some $f_i$ has mean zero, then Theorem~\ref{main2} follows easily. Indeed, if $A\subset\F_q$, then the number of progressions $x,x+P_1(y),\dots,x+P_m(y)$ in $A$ equals $q^2\Lambda(1_A,\dots,1_A)$. Now, writing $1_A=f_A+\alpha$ with $\alpha=|A|/q$, we see that $q^2\Lambda(1_A,\dots,1_A)$ equals $q^2\alpha^{m+1}=|A|^{m+1}/q^{m-1}$ plus $2^{m+1}-1$ other terms of the form $q^2\Lambda(f_0,\dots,f_m)$ with at least one $f_i$ equaling $f_A$.

We will prove such a bound on $\Lambda_{P_1,\dots,P_m}(f_0,\dots,f_m)$ by induction on $m$. When $m=1$, this is a simple consequence of the Weil bound. When $m>1$, the proof is no longer so simple. We do know, in general, that a bound of the form
\begin{equation}\label{Us}
|\Lambda_{P_1,\dots,P_m}(f_0,\dots,f_m)|\leq \min_i\|f_i\|_{U^s}^\beta+O(q^{-\beta})
\end{equation}
holds for some $\beta>0$ and $s\in\N$. Here $\|\cdot\|_{U^s}$ is the Gowers $U^s$-norm on functions $f:\F_q\to\C$, whose definition we will recall in Section~\ref{prelims}. If $s=1$, then $\|f\|_{U^s}=|\E_xf(x)|$, in which case certainly $|\Lambda_{P_1,\dots,P_m}(f_0,\dots,f_m)|\ll q^{-\beta}$ whenever some $f_i$ has mean zero. The key idea of the proof is that, if $s>1$, then one can use the bound for progressions of length $m-1$ to deduce a bound similar to~(\ref{Us}), but involving the $U^{s-1}$-norm instead of the $U^s$-norm. Carrying this out $s-1$ times leads to a bound in terms of the $U^1$-norm, and thus of the form $\Lambda_{P_1,\dots,P_m}(f_0,\dots,f_m)\ll q^{-\gamma}$ for some $\gamma>0$.

This paper is organized as follows. In Section~\ref{prelims}, we set notation, recall some standard definitions, and prove a couple of preliminary results needed in the proof of Theorem~\ref{main2}. In Section~\ref{m=1}, we prove Theorem~\ref{main2} (or more precisely, Theorem~\ref{main}) when $m=1$. In Section~\ref{pf}, we describe the inductive step in the proof and show how Theorem~\ref{main2} follows from Lemma~\ref{ind}. We then prove this key lemma in Section~\ref{lemmas}.

\section*{Acknowledgments}
The author thanks Ben Green, Kannan Soundararajan, Julia Wolf, and the anonymous referees for helpful comments on earlier versions of this paper.

The author is supported by the National Science Foundation Graduate Research Fellowship Program under Grant No. DGE-114747 and by the Stanford University Mayfield Graduate Fellowship.

\section{Preliminaries}\label{prelims}
\subsection{Definitions and notation} For every finite set $S$ and $f:S\to\C$, we denote the average of $f$ over $S$ by $\E_{x\in S}f(x):=\f{1}{|S|}\sum_{x\in S}f(x)$. When averaging over $\F_q$, we will sometimes write $\E_x$ instead of $\E_{x\in\F_q}$.

We say that a complex-valued function $f$ is \textit{$1$-bounded} if $\|f\|_{L^\infty}\leq 1$ and that an $m$-tuple of complex-valued functions $(f_1,\dots,f_m)$ is \textit{$1$-bounded} if each of its components $f_i$ is $1$-bounded.

We normalize the $L^p$-norms on $\F_q$ by setting $\|f\|_{L^p}^p:=\E_x|f(x)|^p$, and also set $\langle f,g\rangle:=\E_xf(x)\overline{g(x)}$ for any two $f,g:\F_q\to\C$. If $\|\cdot\|$ is any norm on the $\C$-vector space of functions $f:\F_q\to\C$, its \textit{dual norm} $\|\cdot\|^*$ is defined by
\[
\|f\|^{*}:=\sup_{\|g\|=1}|\langle f,g\rangle|.
\]

For any $f:\F_q\to\C$ and $h\in\F_q$, define $\Delta_hf:\F_q\to\C$ by
\[
\Delta_hf(x):=f(x+h)\overline{f(x)}
\]
for $x\in\F_q$. Also define, for every $h_1,\dots,h_s\in\F_q$, the function $\Delta_{h_1,\dots,h_s}f:\F_q\to\C$ by
\[
\Delta_{h_1,\dots,h_s}f(x)=(\Delta_{h_1}\cdots\Delta_{h_s}f)(x)
\]
for $x\in\F_q$. Note that if $h,k\in\F_q$, then
\[
\Delta_{h}\Delta_{k}f(x)=f(x+h+k)\overline{f(x+h)f(x+k)}f(x)=\Delta_k\Delta_hf(x),
\]
so the ordering of $h_1,\dots,h_s$ in the definition of $\Delta_{h_1,\dots,h_s}f$ does not matter.

For a function $f:\F_q^2\to\C$ of two variables, we define $\Delta_{h_1,\dots,h_s}^{(1)}f:\F_q^2\to\C$ by applying $\Delta_{h_1,\dots,h_s}$ in the first variable of $f$:
\[
\Delta_{h}^{(1)}f(x,y):=f(x+h,y)\overline{f(x,y)}
\]
and
\[
\Delta_{h_1,\dots,h_s}^{(1)}f(x,y):=(\Delta_{h_1}^{(1)}\cdots\Delta_{h_s}^{(1)}f)(x,y).
\]

Now, for any $s\geq 1$, we define the \textit{Gowers $U^s$-norm} $\|\cdot\|_{U^s}$ (which is only a seminorm when $s=1$) by
\[
\|f\|_{U^s}^{2^s}:=\E_{x,h_1,\dots,h_s\in\F_q}\Delta_{h_1,\dots,h_s}f(x)
\]
for $f:\F_q\to\C$. These norms satisfy $\|f\|_{U^s}\leq\|f\|_{U^{s+1}}$ for every $s\geq 1$. The $U^1$-norm of $f$ equals $|\E_xf(x)|$, and the $U^2$-norm of $f$ equals the $\ell^4$-norm of the Fourier transform $\hat{f}(\psi):=\langle f,\psi\rangle$:
\begin{equation}\label{u2}
\|f\|_{U^2}^4=\sum_{\psi\in\widehat{\F}_q}|\hat{f}(\psi)|^4,
\end{equation}
where $\widehat{\F}_q$ denotes the set of additive characters of $\F_q$. One reference for these and other basic properties of Gowers norms is Section 1.3.3 of~\cite{T}.

\subsection{Counting progressions}
Let $m_1\geq 1$, $m_2\geq 0$, and $P_1,\dots,P_{m_1},Q_1,\dots,$ $Q_{m_2}\in\Z[y]$. For every $F=(f_0,\dots,f_{m_1})$ and $G=(g_1,\dots,g_{m_2})$ with $f_i,g_j:\F_q\to\C$ for $0\leq i\leq m_1$ and $1\leq j\leq m_2$, define
\[
\Lambda_{P_1,\dots,P_{m_1}}^{Q_1,\dots,Q_{m_2}}(F;G):=\E_{x,y}f_0(x)\prod_{i=1}^{m_1}f_i(x+P_i(y))\prod_{j=1}^{m_2}g_j(Q_j(y)).
\]
Even though Theorem~\ref{main2} only concerns $\Lambda_{P_1,\dots,P_m}$, we will need to consider these more general averages involving the extra factor $\prod_{j=1}^{m_2}g_j(Q_j(y))$ in order to run an induction argument.

As mentioned earlier, for any $A\subset\F_q$, the quantity $\Lambda_{P_1,\dots,P_{m}}(1_A,\dots,1_A)$ is the normalized count of the number of progressions $x,x+P_1(y),\dots,x+P_m(y)$ in $A$:
\[
\Lambda_{P_1,\dots,P_{m}}(1_A,\dots,1_A)=\f{\#\{(x,y)\in\F_q:x,x+P_1(y),\dots,x+P_m(y)\in A\}}{q^2}.
\]
Theorem~\ref{main2} will thus follow by setting $m_1:=m$, $m_2:=0$, and $f_i:=1_A$ for $i=0,\dots,m_1$ in Theorem~\ref{main}:
\begin{theorem}\label{main}
Let $m_1\geq 1$ and $m_2\geq 0$ and let $P_1,\dots,P_{m_1},Q_1,\dots,Q_{m_2}\in\Z[y]_0$ be linearly independent over $\Q$. There exist $c,\gamma>0$ such that if the characteristic of $\F_q$ is at least $c$, then
\[
\Lambda_{P_1,\dots,P_{m_1}}^{Q_1,\dots,Q_{m_2}}(F;\Psi)=1_{\Psi=1}\prod_{i=0}^{m_1}\E_xf_i(x)+O_{P_1,\dots,P_{m_1}, Q_1,\dots,Q_{m_2}}(q^{-\gamma})
\]
whenever $F=(f_0,\dots,f_{m_1})$ is $1$-bounded and $\Psi\in(\widehat{\F}_q)^{m_2}$.
\end{theorem}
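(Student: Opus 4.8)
The plan is to prove Theorem~\ref{main} by induction on $m_1$, treating the parameter $m_2$ and the polynomials $Q_1,\dots,Q_{m_2}$ as auxiliary data that the induction is allowed to vary. The base case $m_1=1$ is, as the introduction promises, a direct application of the Weil bound for curves: expanding $\Lambda_{P_1}^{Q_1,\dots,Q_{m_2}}(F;\Psi)$ and using the additive characters appearing in $\Psi$ together with the character attached to $f_1$ after a Fourier expansion (or, more cleanly, a van der Corput / Cauchy--Schwarz step to reduce to an exponential sum over $y$ of the form $\E_y e_p(\text{a nonconstant polynomial in }y)$), one gets a single-variable character sum whose phase is a polynomial of bounded degree with at least one nonzero nonconstant coefficient; the Weil bound gives the needed $O(q^{-\gamma})$ with $\gamma$ depending on the degrees. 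The linear independence of $P_1,Q_1,\dots,Q_{m_2}$ over $\Q$, combined with the large-characteristic hypothesis, is exactly what guarantees the relevant polynomial phase does not degenerate to a constant.

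For the inductive step, assume the theorem holds for all averages with at most $m_1-1$ "$P$-type" polynomials (and arbitrarily many $Q$-type polynomials), and consider $\Lambda_{P_1,\dots,P_{m_1}}^{Q_1,\dots,Q_{m_2}}(F;\Psi)$. The standard generalized-von-Neumann argument, via repeated Cauchy--Schwarz and the change of variables $x\mapsto x-P_{m_1}(y)$ to isolate $f_{m_1}$, shows that $|\Lambda_{P_1,\dots,P_{m_1}}^{Q_1,\dots,Q_{m_2}}(F;\Psi)|$ is controlled by $\|f_{m_1}\|_{U^s}^\beta$ up to an error $O(q^{-\beta})$ for some $s$ and $\beta$ depending only on the polynomials---this is the bound~(\ref{Us}) alluded to in the introduction, and by symmetry one gets the same control by any $\|f_i\|_{U^s}$. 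If $s=1$ we are done immediately, since then $\Psi\neq 1$ or some $f_i$ has mean zero forces the main term to vanish and the $U^1$-control finishes the job. The crux, and the step I expect to be the main obstacle, is the \emph{degree-lowering} mechanism: one must upgrade control by $\|f_i\|_{U^s}$ to control by $\|f_i\|_{U^{s-1}}$. The idea is to write $\|f_i\|_{U^s}^{2^s} = \E_{h_1,\dots,h_{s-1}} \|\Delta_{h_1,\dots,h_{s-1}} f_i\|_{U^1}^2$ (roughly), expand the $U^1$-norm of the multiplicative derivative as a Fourier coefficient, and then reinterpret the resulting expression as a polynomial average of the \emph{same shape but with $m_1-1$ $P$-type terms} (the derivative $\Delta_h$ in one of the functions effectively removes one linear degree of freedom, or replaces $f_{m_1}(x+P_{m_1}(y))$ by a product that can be absorbed into a lower-complexity configuration together with new $Q$-type factors coming from the $\Delta_{h_j}$'s). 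Applying the inductive hypothesis to that lower-complexity average shows the Fourier coefficient is small unless a certain phase is trivial, which pins down $\|\Delta_{h_1,\dots,h_{s-1}} f_i\|_{U^1}$ in terms of a lower-order Gowers norm of $f_i$, i.e. yields the desired $U^{s-1}$-bound. This is precisely the content deferred to Lemma~\ref{ind} and proved in Section~\ref{lemmas}; getting the bookkeeping of the new $Q$-polynomials right---ensuring they remain linearly independent over $\Q$ and of bounded degree so the inductive hypothesis genuinely applies---is the delicate part.

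Iterating the degree-lowering step $s-1$ times reduces the $U^s$-control all the way down to $U^1$-control, at which point the vanishing of the main term (guaranteed by the hypothesis that $\Psi\neq 1$ or some $f_i$ has mean zero) together with $\|f_i\|_{U^1}=|\E_x f_i(x)|$ completes the proof of the inductive step, with a final exponent $\gamma>0$ that decays rapidly in $m_1$ and in the degrees of the $P_i$ and $Q_j$ but which we do not track. Assembling the base case and the inductive step gives Theorem~\ref{main}, and Theorem~\ref{main2} follows by specializing $m_2=0$ and $f_i=1_A$ as described in the excerpt.
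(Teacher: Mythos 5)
Your overall architecture matches the paper's: induction on $m_1$ with the $Q_j$'s and the character tuple $\Psi$ carried along as auxiliary data, a Weil-bound base case for $m_1=1$, PET induction giving control by a $U^s$-norm, iterated degree lowering down to the $U^1$-norm, and a final step in which the mean of one $f_i$ is subtracted and the inductive hypothesis supplies the main term. The base case and the endgame are essentially right (the base case in the paper goes by Fourier expansion rather than van der Corput, but either works).

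The gap is in your sketch of the degree-lowering mechanism, which is precisely the content of Lemma~\ref{ind} and is the heart of the proof. You propose to write $\|f_i\|_{U^s}^{2^s}=\E_{h_1,\dots,h_{s-1}}\|\Delta_{h_1,\dots,h_{s-1}}f_i\|_{U^1}^2$, expand the inner $U^1$-norm as a Fourier coefficient, and ``reinterpret the resulting expression as a polynomial average with $m_1-1$ $P$-type terms.'' This does not work: $\E_x\Delta_{h_1,\dots,h_{s-1}}f_i(x)$ contains no $y$-average and no polynomial structure at all, since $f_i$ is an arbitrary $1$-bounded function, so there is nothing for the inductive hypothesis to act on; and in any case there is no reason for $\|f_i\|_{U^s}$ to be small (take $f_i$ a nontrivial additive character, for which $\|f_i\|_{U^s}=1$ for $s\geq2$). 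What must be lowered is the norm controlling the \emph{counting operator}, not the norm of $f_i$. The paper does this through the dual function $g(x)=\E_y\prod_{i}f_i(x+P_i(y))\prod_j\psi_j(Q_j(y))$: one first decomposes $f_0=f_a+f_b+f_c$ by a Hahn--Banach argument (Proposition~\ref{hb}) with $\|f_a\|_{U^s}^*\leq q^{\delta_1}$, $\|f_b\|_{L^1}$ small, and $\|f_c\|_{U^s}$ small (the $f_c$ piece is where the assumed $U^s$-bound~(\ref{s}) is actually used), so that the main term is $\langle f_a,\overline{g}\rangle\leq q^{\delta_1}\|g\|_{U^s}$; then Lemma~\ref{cs} reduces $\|g\|_{U^s}$ to an average of $\|g_{h_1,\dots,h_{s-2}}\|_{U^2}$, and the Fourier coefficients $\widehat{g_{h_1,\dots,h_{s-2}}}(\phi)$, after the shift $x\mapsto x-P_k(y)$, become averages $\Lambda_{R_1,\dots,R_{m_1-1}}^{S_1,\dots,S_{m_2+1}}$ in which the new $Q$-type polynomial is $P_k$ itself, coming from the character twist --- not ``new $Q$-type factors coming from the $\Delta_{h_j}$'s.'' Applying the inductive hypothesis to these lower-complexity averages (for $k=1,2$, plus a third configuration to cover $i=0$) yields $|\widehat{g_{h}}(\phi)|\lesssim\min_i|\E_z\Delta_{h}f_i(z)|+q^{-\gamma'}$, and averaging over the $h_j$'s produces $\min_i\|f_i\|_{U^{s-1}}^{2^{s-1}}$. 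Without the dual function, the decomposition of $f_0$, and the $U^2$-reduction lemma, the passage from $U^s$ to $U^{s-1}$ as you describe it does not go through. There is also a quantitative point you elide: each iteration costs a factor $q^{\delta_1}$ and introduces further error terms, so the parameters $\delta_k^{(\ell)}$ of all $s-1$ iterations must be chosen jointly at the end to leave a net negative power of $q$.
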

By $1_{\Psi=1}$ here, we mean the quantity that equals $1$ if every component of $\Psi$ is the trivial character and equals $0$ otherwise.

As mentioned in the introduction, the starting point for the proof of Theorem~\ref{main} is a bound for $|\Lambda_{P_1,\dots,P_{m_1}}^{Q_1,\dots,Q_{m_2}}(F;\Psi)|$ in terms of a $U^s$-norm. In order to state it, we will need a definition.

For any finite collection of polynomials $P_1,\dots,P_m\in\F_q[y]$, define their \textit{degree sequence} to be the vector $v=(v_i)_{i=1}^\infty\in(\N\cup\{0\})^\N$ with
\[
v_i:=\#\text{ of distinct leading terms of }P_1,\dots,P_m\text{ of degree }i.
\]
For example, the degree sequence of $y,2y,y^2,y^2+3y,y^5$ is $(2,1,0,0,1,0,\dots)$.

By the same argument that appeared in~\cite{Pr}, which uses the PET induction scheme introduced by Bergelson and Leibman in~\cite{BL}, we have the following proposition.
\begin{proposition}\label{PET1}
Let $P_1,\dots,P_m\in\F_q[y]$. There exist $1\geq \beta>0$ and $s\in\N$ depending only on the degree sequence $v$ of $P_1,\dots,P_m$ such that
\[
|\Lambda_{P_1,\dots,P_m}(F)|\leq\min_{i}\|f_i\|_{U^s}^\beta+O_{v}(q^{-\beta})
\]
for every $1$-bounded $F=(f_0,\dots,f_{m})$.
\end{proposition}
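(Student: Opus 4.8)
The plan is to prove Proposition~\ref{PET1} by the PET (polynomial exhaustion technique) induction of Bergelson and Leibman, adapted to the finite field setting exactly as in Prendiville's work~\cite{Pr}. The induction runs on the degree sequence $v$, ordered by a suitable well-ordering of $(\N\cup\{0\})^\N$ (roughly: first by the largest degree occurring, then by the number of leading terms of that degree, then recursively). The base case is when all $P_i$ have degree $1$, i.e. $v=(v_1,0,0,\dots)$; after a change of variables in $y$ one is counting along genuine arithmetic-type configurations, where the claimed bound in terms of a single $U^s$-norm is the classical generalized von Neumann inequality (obtainable directly by Cauchy--Schwarz and a change of variables, incurring only an $O(q^{-\beta})$ loss from the diagonal contributions).

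For the inductive step, I would fix an index $i_0$ with $\deg P_{i_0}$ minimal (or handle $i=0$ separately by translating so that $P_0=0$), and perform the standard PET manipulation: write
\[
\Lambda_{P_1,\dots,P_m}(F)=\E_{y}\E_x\prod_{i=0}^m f_i(x+P_i(y)),
\]
shift $x\mapsto x-P_{i_0}(y)$ so that the $i_0$-th polynomial becomes $0$, then apply Cauchy--Schwarz in the $y$-variable (introducing a new variable $h$) to eliminate the factor $f_{i_0}$. This replaces each $f_i$ by $\Delta_{h}^{(\cdot)}$-type expressions and replaces the polynomial family $\{P_i(y)-P_{i_0}(y)\}$ by the family $\{P_i(y+h)-P_{i_0}(y+h),\, P_i(y)-P_{i_0}(y)\}_{i\neq i_0}$ in the variables $(y,h)$. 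The point of the PET scheme is that the degree sequence of this new family, viewed appropriately, is strictly smaller in the chosen well-ordering: the polynomials sharing the leading term with $P_{i_0}$ have had their degree strictly reduced, while no new leading terms of higher degree are created (here one needs $\Char\F_q$ large enough that no unexpected cancellation of leading terms occurs, which is why $c$ enters). One then applies the inductive hypothesis to the new average, extracts a $U^{s-1}$-control on one of the differenced functions, and finally reverses the Cauchy--Schwarz via the standard inequality $\|\Delta_h f\|_{U^{s-1}}$-averaged-over-$h$ controls $\|f\|_{U^s}$ to upgrade this to a $U^s$-bound on the original $f_i$. Since one can choose which function to difference out, one gets the bound for $\min_i\|f_i\|_{U^s}$ after taking $s$ to be the maximum over the finitely many choices.

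Two technical points deserve care. First, tracking the error terms: each Cauchy--Schwarz step squares the quantity and the $O(q^{-\beta})$ errors must be managed so they remain a negative power of $q$ after a bounded (depending only on $v$) number of steps; this is routine since the number of steps and the implied constants depend only on $v$. Second, and more substantively, I must verify that the PET well-ordering genuinely decreases and terminates — this is the combinatorial heart of the Bergelson--Leibman scheme — and that throughout, the extra $Q_j$ polynomials do not appear here (Proposition~\ref{PET1} is stated without them; the $Q_j$'s enter only later, in the full induction of Theorem~\ref{main}).

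I expect the main obstacle to be the bookkeeping in the PET induction: correctly defining the well-ordering on degree sequences, checking that each Cauchy--Schwarz step strictly decreases it, and confirming that the characteristic being large enough prevents leading-term cancellations that could spoil the degree count. None of this is conceptually new — it is precisely the argument of~\cite{Pr} — so the cleanest exposition is to set up the ordering, state the key lemma that one Cauchy--Schwarz step reduces it, and then reference~\cite{Pr},~\cite{BL} for the verification while noting that in the finite field setting the only modification is the harmless $O(q^{-\beta})$ diagonal error and the hypothesis on $\Char\F_q$.
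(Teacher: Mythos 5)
Your outline is essentially the paper's own proof: the paper establishes Proposition~\ref{PET1} by exactly the Bergelson--Leibman PET induction as carried out in Sections 3--5 of~\cite{Pr}, transplanted to $\F_q$ (where it is simpler, since all variables range over the whole field), and your well-ordering on degree sequences, van der Corput/Cauchy--Schwarz steps, linear base case, and recovery of the $U^s$-bound from averaged $U^{s-1}$-bounds on differenced functions is that argument.

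One point needs correcting, however. Proposition~\ref{PET1} is stated for arbitrary $P_1,\dots,P_m\in\F_q[y]$, with $\beta$, $s$, and the implied constant depending only on the degree sequence $v$, and it carries \emph{no} hypothesis on $\Char\F_q$; in the paper the characteristic threshold $c$ enters only later, through Lemma~\ref{weil} (linear independence of the reductions of integer polynomials mod $p$), not through the PET step. Since the degree sequence is already defined via leading terms in $\F_q[y]$, there is no question of leading terms cancelling ``upon reduction''; what does change in small characteristic is that differencing can drop degrees faster than over $\Z$ (for instance $P(y+h)-P(y)$ can be constant in $y$ when $p\mid\deg P$), so the bookkeeping in the well-ordering argument must be checked to tolerate such degenerate drops (they only prune the PET tree, but this should be said). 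As written, your sketch instead imports a large-characteristic hypothesis ``to prevent leading-term cancellation,'' which proves a weaker variant of the proposition than the one stated --- adequate for the application to Theorem~\ref{main}, where the characteristic is assumed large anyway, but not literally Proposition~\ref{PET1}, and it misattributes where the constant $c$ comes from in the paper.
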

This can be proven by carrying out the argument in Sections 3--5 of~\cite{Pr} almost word-for-word, but in the finite field setting instead of the integer setting. In fact, the proof in finite fields is even simpler than this, since the variables in the definition of $\Lambda_{P_1,\dots,P_m}$ range over all of $\F_q$ instead of over intervals of vastly different sizes as they do in~\cite{Pr}.

It is easy to deduce from Proposition~\ref{PET1} a similar bound for $\Lambda_{P_1,\dots,P_{m_1}}^{Q_1,\dots,Q_{m_2}}(F;\Psi)$ whenever $\Psi\in(\widehat{\F}_q)^{m_2}$.
\begin{proposition}\label{PET2}
Let $P_1,\dots,P_{m_1},Q_1,\dots,Q_{m_2}\in\F_q[y]$. There exist $1\geq\beta>0$ and $s\in\N$ depending only on the degree sequences $v^{(1)},v^{(2)}$ of $P_1,\dots,P_{m_1},$ $Q_1,\dots,Q_{m_2}$ and $P_1,\dots,P_{m_1},Q_1+P_{m_1},\dots,Q_{m_2}+P_{m_1}$ such that
\[
|\Lambda_{P_1,\dots,P_{m_1}}^{Q_1,\dots,Q_{m_2}}(F;\Psi)|\leq\min_i\|f_i\|_{U^s}^\beta+O_{v^{(1)},v^{(2)}}(q^{-\beta})
\]
for every $1$-bounded $F=(f_0,\dots,f_{m_1})$ and $\Psi\in(\widehat{\F}_q)^{m_2}$.
\end{proposition}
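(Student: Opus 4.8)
The plan is to reduce the estimate for $\Lambda_{P_1,\dots,P_{m_1}}^{Q_1,\dots,Q_{m_2}}(F;\Psi)$ to an instance of Proposition~\ref{PET1} by absorbing the extra factors $\prod_{j=1}^{m_2}g_j(Q_j(y))$ into the progression itself. The key observation is that $g_j(Q_j(y)) = g_j(\,(x+P_{m_1}(y)) - (x+P_{m_1}(y) - Q_j(y))\,)$, but a cleaner bookkeeping is available because $\Psi$ consists of additive characters. Write $\psi_j$ for the $j$th component of $\Psi$, so $g_j = \psi_j$ and $\psi_j(Q_j(y)) = \psi_j(x + (Q_j(y) + P_{m_1}(y) - P_{m_1}(y)))\cdot\overline{\psi_j(x)}\cdot\psi_j(-Q_j(y))\cdot\ldots$ — in fact the genuinely useful identity is simply $\psi_j(Q_j(y)) = \psi_j\bigl(x + (Q_j(y)+P_{m_1}(y))\bigr)\,\overline{\psi_j(x+P_{m_1}(y))}$, valid for every $x$ since $\psi_j$ is a character. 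Substituting this for each $j$ into the definition of $\Lambda$, one expresses $\Lambda_{P_1,\dots,P_{m_1}}^{Q_1,\dots,Q_{m_2}}(F;\Psi)$ as an average of the form $\E_{x,y} h_0(x)\prod_{i=1}^{m_1} h_i(x+P_i(y)) \prod_{j=1}^{m_2} \psi_j(x+(Q_j(y)+P_{m_1}(y)))$, where $h_0 := f_0$, $h_i := f_i$ for $1\le i\le m_1-1$, and $h_{m_1} := f_{m_1}\cdot\prod_{j=1}^{m_2}\overline{\psi_j}$, all of which are still $1$-bounded.

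This rewrites the quantity exactly as $\Lambda_{P_1,\dots,P_{m_1},\,Q_1+P_{m_1},\dots,Q_{m_2}+P_{m_1}}(h_0,h_1,\dots,h_{m_1},\psi_1,\dots,\psi_{m_2})$, an ordinary $\Lambda$-average with no $G$-factors, attached to the enlarged tuple of polynomials $P_1,\dots,P_{m_1},Q_1+P_{m_1},\dots,Q_{m_2}+P_{m_1}$. All of these polynomials lie in $\F_q[y]$ (the shift by $P_{m_1}$ preserves that), so Proposition~\ref{PET1} applies directly: there exist $\beta>0$ and $s\in\N$ depending only on the degree sequence of this enlarged tuple — which is precisely the $v^{(2)}$ named in the statement, while $v^{(1)}$ governs the implied constant through the trivial bookkeeping of how many functions appear — such that the average is bounded by $\min_k\|h_k\|_{U^s}^\beta + O(q^{-\beta})$, the minimum ranging over all $m_1+m_2+1$ functions in the new tuple. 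To finish, I restrict this minimum to the indices $k=0,\dots,m_1-1$, where $h_k=f_k$, which gives $\min_{0\le i\le m_1-1}\|f_i\|_{U^s}^\beta + O(q^{-\beta})$; and for the index $i=m_1$ I use that multiplication by a character is an isometry of every $U^s$-norm, so $\|h_{m_1}\|_{U^s} = \|f_{m_1}\prod_j\overline{\psi_j}\|_{U^s}$, and since $\prod_j\overline{\psi_j}$ is itself an additive character, $\|h_{m_1}\|_{U^s}=\|f_{m_1}\|_{U^s}$. Hence the bound holds with $\min_i$ ranging over all of $0,\dots,m_1$, as claimed.

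The only point requiring a little care — and the closest thing to an obstacle — is the claim that $\|f\cdot\psi\|_{U^s}=\|f\|_{U^s}$ when $\psi$ is an additive character of $\F_q$. This follows because in the expansion $\|f\psi\|_{U^s}^{2^s}=\E_{x,h_1,\dots,h_s}\Delta_{h_1,\dots,h_s}(f\psi)(x)$ the character contributions cancel: $\Delta_h\psi(x)=\psi(x+h)\overline{\psi(x)}=\psi(h)$ is independent of $x$, and iterating, $\Delta_{h_1,\dots,h_s}\psi(x)$ is a product of $2^{s-1}$ factors $\psi(\cdot)$ and $2^{s-1}$ factors $\overline{\psi(\cdot)}$ evaluated at sums of the $h_i$'s, which multiply to $1$ for $s\ge 2$ (and for $s=1$, $\Delta_{h}(f\psi)(x)=\Delta_hf(x)\psi(h)$, and averaging $\psi(h)$ over $h$ just changes which characters survive but not the modulus issue — in fact it is cleanest to only invoke the $s\ge 2$ case, which is all the induction ever uses, or to note $\|f\psi\|_{U^1}=|\E_x f(x)\psi(x)|$ and handle that trivial term by hand). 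A second, entirely routine, point is that the dependence of the error term: one should observe that $q^{-\beta}$ with $\beta$ depending on $v^{(2)}$ can always be weakened to an error depending on both $v^{(1)}$ and $v^{(2)}$, which is harmless. Everything else is a direct substitution.
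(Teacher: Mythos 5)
Your argument is correct in substance and ends up very close to the paper's, but it takes a slightly different (and somewhat leaner) route. The paper performs \emph{two} rewritings: it absorbs $\prod_j\overline{\psi_j}$ into $f_0$, rewriting the average as $\Lambda_{P_1,\dots,P_{m_1},Q_1,\dots,Q_{m_2}}(f_0\prod_j\overline{\psi_j},f_1,\dots,f_{m_1},\psi_1,\dots,\psi_{m_2})$ to get a bound by $\min_{i\geq 1}\|f_i\|_{U^{s_1}}^{\beta_1}+O_{v^{(1)}}(q^{-\beta_1})$, and then (exactly as you do) absorbs it into $f_{m_1}$ after replacing $Q_j$ by $Q_j+P_{m_1}$, getting a bound by $\min_{i\leq m_1-1}\|f_i\|_{U^{s_2}}^{\beta_2}+O_{v^{(2)}}(q^{-\beta_2})$; the two are then combined with $\beta=\min(\beta_1,\beta_2)$, $s=\max(s_1,s_2)$, which is why both degree sequences appear in the statement. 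You use only the second rewriting and recover the missing index $i=m_1$ via the phase-invariance $\|f\psi\|_{U^s}=\|f\|_{U^s}$ for an additive character $\psi$ and $s\geq 2$, which is a correct and standard fact ($\Delta_h\psi(x)=\psi(h)$ is constant in $x$, so it cancels after one further difference). This lets a single application of Proposition~\ref{PET1} give the full $\min_{0\leq i\leq m_1}$, with dependence on $v^{(2)}$ alone, and your remark that this can be weakened to dependence on $(v^{(1)},v^{(2)})$ is fine; the paper's two-sided trick buys the same conclusion without ever needing the phase-invariance lemma or any discussion of small $s$.

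Two minor caveats, neither a genuine gap. First, your handling of $s=1$ is muddled: $\|f\psi\|_{U^1}=|\E_xf(x)\psi(x)|$ is in general \emph{not} $\|f\|_{U^1}$, and ``this is all the induction ever uses'' does not prove the proposition as stated; the clean fix is simply to replace $s$ by $\max(s,2)$ at the outset, which is harmless since $\|\cdot\|_{U^s}\leq\|\cdot\|_{U^{s+1}}$ makes the claimed bound weaker as $s$ grows. Second, the false starts in your opening paragraph should be deleted; the identity you actually use, $\psi_j(Q_j(y))=\psi_j\bigl(x+Q_j(y)+P_{m_1}(y)\bigr)\overline{\psi_j\bigl(x+P_{m_1}(y)\bigr)}$, is the right one and coincides with the paper's second substitution.
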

\begin{proof}
Note that
\begin{align*}
\Lambda_{P_1,\dots,P_{m_1}}^{Q_1,\dots,Q_{m_2}}(F;\Psi) &= \E_{x,y}f_0(x)\prod_{i=1}^{m_1}f_i(x+P_i(y))\prod_{i=1}^{m_2}\overline{\psi_j(x)}\psi_j(x+Q_j(y)) \\
&= \Lambda_{P_1,\dots,P_{m_1}, Q_1,\dots,Q_{m_2}}(f_0',f_1,\dots,f_{m_1},\psi_1,\dots,\psi_{m_2}),
\end{align*}
where $f_0':=f_0\prod_{j=1}^{m_2}\overline{\psi_j}$. Thus, since all additive characters are $1$-bounded and $f_0'$ is also $1$-bounded if $f_0$ is, we have by Proposition~\ref{PET1} that
\[
|\Lambda_{P_1,\dots,P_{m_1}}^{Q_1,\dots,Q_{m_2}}(F;\Psi)|\leq\min_{i\geq 1}\|f_i\|_{U^{s_1}}^{\beta_1}+O_{v^{(1)}}(q^{-\beta_1})
\]
for some $1\geq\beta_1>0$ and $s_1\in\N$ depending only on $v^{(1)}$.

Similarly,
\[
\Lambda_{P_1,\dots,P_{m_1}}^{Q_1,\dots,Q_{m_2}}(F;\Psi)=\Lambda_{P_1,\dots,P_{m_1},Q_1+P_{m_1},\dots,Q_{m_2}+P_{m_1}}(f_0,\dots,f_{m_1-1},f_{m_1}',\psi_1,\dots,\psi_{m_2}),
\]
where $f_{m_1}':=f_{m_1}\prod_{j=1}^{m_2}\overline{\psi_j}$. Thus, by Proposition~\ref{PET1} again, we have that
\[
|\Lambda_{P_1,\dots,P_{m_1}}^{Q_1,\dots,Q_{m_2}}(F;\Psi)|\leq\min_{i\leq m_1-1}\|f_i\|_{U^{s_2}}^{\beta_2}+O_{v^{(2)}}(q^{-\beta_2})
\]
for some $1\geq \beta_2>0$ and $s_2\in\N$ depending only on $v^{(2)}$.

Since $F$ is $1$-bounded, we have that $\|f_i\|_{U^s}\leq 1$ for all $i=0,\dots,m_1$. Also, recall that $\|f\|_{U^s}\leq\|f\|_{U^{s+1}}$ for all $s\geq 1$. Thus, by setting $\beta=\min(\beta_1,\beta_2)$ and $s=\max(s_1,s_2)$, we have
\[
|\Lambda_{P_1,\dots,P_{m_1}}^{Q_1,\dots,Q_{m_2}}(F;\Psi)|\leq\min_{i}\|f_i\|_{U^{s}}^{\beta}+O_{v^{(1)},v^{(2)}}(q^{-\beta}).
\]
\end{proof}

\subsection{Decomposing functions} The key idea of the proof of Theorem~\ref{main} is that one can turn a bound for $\Lambda_{P_1,\dots,P_{m_1}}^{Q_1,\dots,Q_{m_2}}(F;\Psi)$ in terms of the $U^s$-norm into a bound in terms of the $U^{s-1}$-norm, provided one has shown that the conclusion of Theorem~\ref{main} holds when the pair $(m_1,m_2)$ is replaced by $(m_1-1,m_2+1)$. To carry out this step of the proof, we first decompose $f_0$ as
\begin{equation}\label{decomp}
f_0=f_a+f_b+f_c,
\end{equation}
where $\|f_b\|_{L^1}$ and $\|f_c\|_{U^s}$ are small and $\|f_a\|_{U^s}^*$ and $\|f_c\|_\infty$ are not too large. Inserting $f_a+f_b+f_c$ in place of $f_0$ and using multilinearity, we can bound $\Lambda_{P_1,\dots,P_{m_1}}^{Q_1,\dots,Q_{m_2}}(F;\Phi)$ in terms of the $U^s$-norm of a dual function, plus some small error depending on the size of $\|f_c\|_{U^s}$ and $\|f_b\|_{L^1}$.

Our final task in this section is to prove that such a decomposition always exists. To do this, we will use a technique due to Gowers. In~\cite{G3}, Gowers describes a general method for proving decomposition results for functions using the hyperplane-separation version of the Hahn-Banach theorem. This method was used in~\cite{G3} to give a new proof of the transference principle, and was also used by Gowers and Wolf~\cite{GW2}~\cite{GW1}~\cite{GW3} in work on the true complexity of systems of linear forms.

To prove our decomposition result, we will use the following corollary of the Hahn-Banach theorem from~\cite{G3}:
\begin{corollary}[Corollary 3.2 of~\cite{G3}]\label{sep}
Let $K_1,\dots,K_r$ be closed convex subsets of $\R^n$ that all contain $0$, and let $c_1,\dots,c_r>0$. Suppose that $f\in\R^n$ cannot be written as
\[
f=f_1+\dots+f_r
\]
with $f_i\in c_iK_i$ for $i=1,\dots,r$. Then there exists $\phi\in\R^n$ such that $\langle f,\phi\rangle>1$ and $\langle g_i,\phi\rangle\leq c_i\1$ for every $g_i\in K_i$, $i=1,\dots,r$.
\end{corollary}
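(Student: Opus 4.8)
The plan is to deduce the statement from the strict separation theorem for a point and a closed convex set in $\R^n$, applied to the Minkowski sum
\[
K:=c_1K_1+\cdots+c_rK_r.
\]
Since each $K_i$ is convex and contains $0$, the set $K$ is convex and contains $0$, and the hypothesis that $f$ admits no representation $f=f_1+\cdots+f_r$ with $f_i\in c_iK_i$ is precisely the assertion that $f\notin K$. In the settings in which this corollary gets used, the underlying space is a finite-dimensional $\R^n$ and each $K_i$ is compact, so $K$ is compact and in particular closed; this is essentially the only role compactness plays.

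Granting that $K$ is closed, I would next invoke strict separation: there exist $\psi\in\R^n$ and a scalar $\mu$ with $\langle f,\psi\rangle>\mu>\langle g,\psi\rangle$ for every $g\in K$. Evaluating the right-hand inequality at $g=0\in K$ forces $\mu>0$, so after replacing $\psi$ by $\phi:=\psi/\mu$ we obtain $\langle f,\phi\rangle>1$ together with $\langle g,\phi\rangle<1$ for all $g\in K$. It remains to upgrade this uniform bound on $K$ to the claimed per-index bound, and here one uses crucially that every $K_j$ contains $0$: given $i$ and $g_i\in K_i$, placing $c_ig_i$ in the $i$-th slot and $0$ in every other slot exhibits $c_ig_i$ as an element of $K$, whence $c_i\langle g_i,\phi\rangle<1$, i.e.\ $\langle g_i,\phi\rangle\le c_i\1$. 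This $\phi$ then has all the required properties.

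The one genuinely delicate point is obtaining \emph{strict} rather than merely weak separation. For arbitrary (possibly unbounded) closed convex $K_i$ the Minkowski sum $K$ need not be closed, and if $f$ happened to lie in $\overline{K}\setminus K$ then every supporting functional at $f$ would satisfy $\langle f,\phi\rangle\le\sup_{K}\langle\cdot,\phi\rangle$, which after normalization gives only $\langle f,\phi\rangle\le 1$ and not the strict inequality demanded in the conclusion. So closedness of $K$ is the hypothesis one must take care over; because the $K_i$ relevant here live in finite-dimensional spaces and are compact, the difficulty does not arise, and once strict separation is secured the remaining steps — the rescaling by $\mu$ and the bookkeeping exploiting $0\in K_j$ — are entirely routine.
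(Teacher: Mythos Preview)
The paper does not prove this corollary; it merely quotes it from~\cite{G3}. Your argument via the Minkowski sum $K=c_1K_1+\cdots+c_rK_r$ and strict separation of the point $f$ from $K$ is the standard route (and is essentially Gowers's own proof in~\cite{G3}), and the normalization and per-index bookkeeping are correct. Your caveat about closedness of $K$ is well placed: as you note, the only application in this paper is in Proposition~\ref{hb}, where each $K_i$ is a scaled closed unit ball of a norm on a finite-dimensional space, hence compact, so the Minkowski sum is compact and strict separation is available.
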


We will also need the following special case of Lemma 3.4 from~\cite{G3}:
\begin{lemma}[Special case of Lemma 3.4 of~\cite{G3}]\label{l1}
Let $\|\cdot\|_1$ and $\|\cdot\|_2$ be norms on $\R^n$, and define another norm on $\R^n$ by
\[
\|f\|:=\inf\{\|f_1\|_1+\|f_2\|_2:f_1+f_2=f\}.
\]
Then $\|g\|^*=\max(\|g\|_1^*,\|g\|_2^*)$.
\end{lemma}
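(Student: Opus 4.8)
The plan is to establish the identity by proving the two inequalities $\|g\|^* \geq \max(\|g\|_1^*, \|g\|_2^*)$ and $\|g\|^* \leq \max(\|g\|_1^*, \|g\|_2^*)$ separately, working directly from the definition of the dual norm $\|g\|^* = \sup_{\|f\| \leq 1} |\langle f, g \rangle|$.

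For the first inequality, I would note that the trivial decompositions $f = f + 0$ and $f = 0 + f$ are admissible in the infimum defining $\|\cdot\|$, so that $\|f\| \leq \|f\|_1$ and $\|f\| \leq \|f\|_2$ for every $f$. Consequently the unit ball of $\|\cdot\|$ contains the unit balls of both $\|\cdot\|_1$ and $\|\cdot\|_2$, and taking the supremum of $|\langle f, g \rangle|$ over a larger set only increases it; hence $\|g\|^* \geq \|g\|_i^*$ for $i = 1, 2$, which gives $\|g\|^* \geq \max(\|g\|_1^*, \|g\|_2^*)$.

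For the reverse inequality, I would fix any $f$ with $\|f\| \leq 1$ and any decomposition $f = f_1 + f_2$, and estimate
\[
|\langle f, g \rangle| \leq |\langle f_1, g \rangle| + |\langle f_2, g \rangle| \leq \|f_1\|_1 \|g\|_1^* + \|f_2\|_2 \|g\|_2^* \leq \max(\|g\|_1^*, \|g\|_2^*)\bigl(\|f_1\|_1 + \|f_2\|_2\bigr),
\]
using bilinearity of $\langle \cdot, \cdot \rangle$ and the defining property of each dual norm. Since this holds for every admissible decomposition, taking the infimum over $f = f_1 + f_2$ yields $|\langle f, g \rangle| \leq \max(\|g\|_1^*, \|g\|_2^*)\|f\| \leq \max(\|g\|_1^*, \|g\|_2^*)$, and then taking the supremum over $\|f\| \leq 1$ gives $\|g\|^* \leq \max(\|g\|_1^*, \|g\|_2^*)$. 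Combining the two bounds proves the lemma.

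There is no serious obstacle here; the only point requiring a little care is the order of operations in the second inequality — the chain of estimates must be valid for \emph{every} decomposition of $f$ before one is entitled to pass to the infimum, and only afterwards to the supremum over the unit ball. If one wanted to also verify that $\|\cdot\|$ really is a norm rather than rely on the statement, the only nontrivial point is positive-definiteness: if $\|f\| = 0$ then there exist decompositions $f = f_1^{(k)} + f_2^{(k)}$ with $\|f_1^{(k)}\|_1, \|f_2^{(k)}\|_2 \to 0$, so $f_1^{(k)}, f_2^{(k)} \to 0$ by equivalence of norms on $\R^n$, forcing $f = 0$; homogeneity and the triangle inequality are immediate from the definition.
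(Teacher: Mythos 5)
Your proof is correct. Note that the paper itself does not prove this statement at all: it is imported verbatim as a special case of Lemma 3.4 of Gowers's paper \cite{G3}, so there is no in-paper argument to compare against. What you give is the standard duality argument (and essentially the one underlying Gowers's lemma): the lower bound because each unit ball $\{\|f\|_i\leq 1\}$ sits inside the unit ball of $\|\cdot\|$, and the upper bound by estimating $|\langle f,g\rangle|\leq \|f_1\|_1\|g\|_1^*+\|f_2\|_2\|g\|_2^*$ for an arbitrary decomposition and only then passing to the infimum and the supremum, which you correctly flag as the one point of order-of-quantifiers care. Your closing remark on positive-definiteness of $\|\cdot\|$ (via equivalence of norms on $\R^n$) is a harmless extra; the lemma as stated takes for granted that $\|\cdot\|$ is a norm, which is all the application in Proposition 2.6 needs.
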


Now we can prove that the decomposition in~(\ref{decomp}) exists.
\begin{proposition}\label{hb}
Let $\|\cdot\|$ be any norm on the $\C$-vector space of complex-valued functions on $\F_q$, and let $\delta_1,\delta_2,\delta_3,\delta_4>0$. Suppose that $f:\F_q\to\C$ with $\|f\|_{L^2}\leq 1$. If $q^{\delta_2-\delta_3}+q^{\delta_4-\delta_1}\leq 1/2$, then there exist $f_a,f_b,f_c:\F_q\to\C$ such that
\[
f=f_a+f_b+f_c,
\]
$\|f_a\|^*\leq q^{\delta_1}$, $\|f_b\|_{L^1}\leq q^{-\delta_2}$, $\|f_c\|_{L^\infty}\leq q^{\delta_3}$, and $\|f_c\|\leq q^{-\delta_4}$.
\end{proposition}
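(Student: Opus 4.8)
The plan is to prove the contrapositive by Gowers' Hahn--Banach technique, combining Corollary~\ref{sep} with Lemma~\ref{l1}. Throughout I would regard the space of functions $\F_q\to\C$ as a $2q$-dimensional \emph{real} vector space equipped with the inner product $\langle g,h\rangle:=\re\E_x g(x)\overline{h(x)}$. Since $\|\cdot\|$, $\|\cdot\|_{L^1}$ and $\|\cdot\|_{L^\infty}$ are norms on a \emph{complex} vector space, they are invariant under $g\mapsto e^{i\theta}g$; consequently the dual norms computed with $\langle\cdot,\cdot\rangle$ agree with the usual ones, so that in particular $\|\cdot\|_{L^1}^*=\|\cdot\|_{L^\infty}$, $\|\cdot\|_{L^2}^*=\|\cdot\|_{L^2}$, and $\|\cdot\|^{**}=\|\cdot\|$ by finite-dimensional biduality.

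Now suppose, for contradiction, that no decomposition $f=f_a+f_b+f_c$ with the stated properties exists. This says exactly that $f\notin c_1K_1+c_2K_2+c_3K_3$, where $K_1:=\{g:\|g\|^*\le 1\}$, $c_1:=q^{\delta_1}$; $K_2:=\{g:\|g\|_{L^1}\le 1\}$, $c_2:=q^{-\delta_2}$; and $K_3:=\{g:\|g\|_{L^\infty}\le 1,\ \|g\|\le q^{-\delta_3-\delta_4}\}$, $c_3:=q^{\delta_3}$, the last chosen so that $c_3K_3=\{g:\|g\|_{L^\infty}\le q^{\delta_3},\ \|g\|\le q^{-\delta_4}\}$. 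Each $K_i$ is a closed convex subset containing $0$, so Corollary~\ref{sep} produces $\phi:\F_q\to\C$ with $\langle f,\phi\rangle>1$ and $\langle g_i,\phi\rangle\le c_i\1$ for all $g_i\in K_i$. Taking the supremum over $g_i\in K_i$ (and using that each $K_i$ is stable under $g\mapsto e^{i\theta}g$ to reinstate absolute values), these three inequalities become $\|\phi\|\le q^{-\delta_1}$, $\|\phi\|_{L^\infty}\le q^{\delta_2}$, and $\mu^*(\phi)\le q^{-\delta_3}$, where $\mu:=\max(\|\cdot\|_{L^\infty},\,q^{\delta_3+\delta_4}\|\cdot\|)$ is the norm whose unit ball is $K_3$.

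To exploit the third bound I would compute $\mu^*$ using Lemma~\ref{l1}. Taking $\|\cdot\|_1:=\|\cdot\|_{L^1}$ and $\|\cdot\|_2:=q^{-\delta_3-\delta_4}\|\cdot\|^*$ there, the infimal-convolution norm $N(\psi):=\inf\{\|\psi_1\|_{L^1}+q^{-\delta_3-\delta_4}\|\psi_2\|^*:\psi_1+\psi_2=\psi\}$ has dual $N^*=\max(\|\cdot\|_{L^\infty},\,q^{\delta_3+\delta_4}\|\cdot\|^{**})=\mu$, whence $\mu^*=N^{**}=N$. Because the infimum defining $N(\phi)$ is attained (the map $\psi_1\mapsto\|\psi_1\|_{L^1}+q^{-\delta_3-\delta_4}\|\phi-\psi_1\|^*$ is continuous and coercive on a finite-dimensional space), the inequality $N(\phi)\le q^{-\delta_3}$ yields a genuine splitting $\phi=\phi_1+\phi_2$ with $\|\phi_1\|_{L^1}\le q^{-\delta_3}$ and $\|\phi_2\|^*\le q^{\delta_4}$.

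The contradiction then drops out of the self-pairing $\langle\phi,\phi\rangle=\|\phi\|_{L^2}^2$: by H\"older's inequality and the dual-norm inequality,
\[
\|\phi\|_{L^2}^2=\langle\phi_1,\phi\rangle+\langle\phi_2,\phi\rangle\le\|\phi_1\|_{L^1}\|\phi\|_{L^\infty}+\|\phi_2\|^*\|\phi\|\le q^{\delta_2-\delta_3}+q^{\delta_4-\delta_1}\le 1/2,
\]
using the hypothesis on the exponents. Hence by Cauchy--Schwarz $1<\langle f,\phi\rangle\le\|f\|_{L^2}\|\phi\|_{L^2}\le(1/2)^{1/2}<1$, which is absurd. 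I expect the only places requiring genuine care to be (i) the identification $\mu^*=N$ via Lemma~\ref{l1}, together with checking that the defining infimum is attained so that one really obtains the split $\phi=\phi_1+\phi_2$ rather than merely an approximate one, and (ii) the small amount of bookkeeping needed to pass between the real inner product used for Hahn--Banach and the complex dual norms appearing in the statement; everything else is a short formal computation.
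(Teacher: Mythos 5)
Your proof is correct and follows essentially the same route as the paper: a contradiction argument via Corollary~\ref{sep} applied to the same three convex bodies, Lemma~\ref{l1} to dualize the max-norm, and the same $L^2$ self-pairing to reach the contradiction. The only (harmless) differences are bookkeeping: you treat the complex space directly as $\R^{2q}$ using phase invariance where the paper first reduces to real-valued functions, and you get the split $\phi=\phi_1+\phi_2$ with exact constants via attainment of the infimum where the paper simply concedes a factor of $2$.
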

\begin{proof}
Note that it suffices to prove the result for real-valued functions, for we can write $f=g+ih$ where $g$ and $h$ are real-valued and $\|f\|_{L^2}^2=\|g\|_{L^2}^2+\|h\|_{L^2}^2$. So assume for the remainder of this proof that $f$ is real-valued.

Suppose by way of contradiction that no such decomposition of $f$ exists. Define a norm $\|\cdot\|'$ on the $\R$-vector space of functions $\F_q\to\R$ by $\|f\|':=\max(q^{-\delta_3}\|f\|_{L^\infty},q^{\delta_4}\|f\|)$. Note that this vector space is isomorphic to $\R^q$. We apply Corollary~\ref{sep} with $f$ and the subsets
\[
K_1:=\{g:\F_q\to\R:\|g\|^*\leq q^{\delta_1}\},
\]
\[
K_2:=\{g:\F_q\to\R:\|g\|_{L^1}\leq q^{-\delta_2}\},
\]
and
\[
K_3:=\{g:\F_q\to\R:\|g\|'\leq 1\},
\]
which are all closed, convex, and contain $0$ since they are each the scaled closed unit ball of some norm. So, there exists $\phi:\F_q\to\R$ such that $\langle f,\phi\rangle>1$ and $\langle g_i,\phi\rangle\leq 1$ for every $g_i\in K_i$, $i=1,2,3$.

Since $\|\cdot\|^{**}=\|\cdot\|$ and $\langle g,\phi\rangle\leq 1$ whenever $\|g\|^*\leq q^{\delta_1}$, we have that $\|\phi\|\leq q^{-\delta_1}$. Similarly, since $\|\cdot\|_{L^\infty}^*=\|\cdot\|_{L^1}$ and $\langle g,\phi\rangle\leq 1$ whenever $\|g\|_{L^1}\leq q^{-\delta_2}$, we have that $\|\phi\|_{L^\infty}\leq q^{\delta_2}$. For the same reason, we also have $\|\phi\|'^*\leq 1$, which by Lemma~\ref{l1} implies that
\[
\inf\{q^{\delta_3}\|\phi_1\|_{L^1}+q^{-\delta_4}\|\phi_2\|^*:\phi_1+\phi_2=\phi\}\leq 1.
\]
Thus, there exist $\phi_1,\phi_2:\F_q\to\R$ such that $\phi=\phi_1+\phi_2$ and $q^{\delta_3}\|\phi_1\|_{L^1}+q^{-\delta_4}\|\phi_2\|^*\leq 2$, which implies that $\|\phi_1\|_{L^1}\leq 2q^{-\delta_3}$ and $\|\phi_2\|^*\leq 2q^{\delta_4}$.

Now, $\|\phi\|_{L^2}^2=\langle \phi,\phi\rangle=\langle \phi,\phi_1\rangle+\langle \phi,\phi_2\rangle$, and by the above, we have that
\[
|\langle \phi,\phi_1\rangle|\leq \|\phi\|_{L^\infty}\|\phi_1\|_{L^1}\leq 2q^{\delta_2-\delta_3}
\]
and, similarly, that
\[
|\langle \phi,\phi_2\rangle|\leq \|\phi\|\|\phi_2\|^*\leq 2q^{\delta_4-\delta_1}.
\]
Thus, $\|\phi\|_{L^2}^2\leq 2(q^{\delta_2-\delta_3}+q^{\delta_4-\delta_1})$. However, we also have that $1<\langle f,\psi\rangle\leq\|f\|_{L^2}\|\phi\|_{L^2}$ by Cauchy--Schwarz, so that $\|\phi\|_{L^2}^2>1$ since $\|f\|_{L^2}\leq 1$. This gives us a contradiction whenever $q^{\delta_2-\delta_3}+q^{\delta_4-\delta_1}\leq 1/2$.
\end{proof}

\section{Proof of Theorem~\ref{main} when $m_1=1$}\label{m=1}
As mentioned in the introduction, we will prove Theorem~\ref{main} by induction on $m_1$. In this section, we show that the conclusion of Theorem~\ref{main} holds when $m_1=1$.

The following is a simple consequence of the Weil bound and the classification of additive characters of $\F_q$, both of whose proofs can be found in~\cite{K}.
\begin{lemma}\label{weil}
Let $P_1,\dots,P_m\in\Z[y]_0$ be linearly independent over $\Q$. There exists $c>0$ such that if the characteristic of $\F_q$ is at least $c$ and $\psi_1,\dots,\psi_m\in\widehat{\F}_q$ are not all trivial, then
\[
\E_{y}\prod_{i=1}^m\psi_i(P_i(y))\ll_{P_1,\dots,P_m}q^{-1/2}.
\]
\end{lemma}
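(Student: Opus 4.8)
The plan is to reduce the estimate to a single exponential sum in one variable over $\F_q$ and then invoke the Weil bound for curves. First I would use the classification of additive characters of $\F_q$: every $\psi\in\widehat{\F}_q$ is of the form $\psi(t)=e_p(\Tr(at))$ for a unique $a\in\F_q$, where $e_p(\cdot)=e^{2\pi i\cdot/p}$, $p=\Char\F_q$, and $\Tr\colon\F_q\to\F_p$ is the field trace. Writing $\psi_i(t)=e_p(\Tr(a_it))$, we have
\[
\E_y\prod_{i=1}^m\psi_i(P_i(y))=\E_y\, e_p\!\left(\Tr\!\left(\sum_{i=1}^m a_iP_i(y)\right)\right)=\E_y\, e_p\bigl(\Tr(R(y))\bigr),
\]
where $R:=\sum_{i=1}^m a_iP_i\in\F_q[y]$ (after reducing the integer polynomials $P_i$ mod $p$). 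The hypothesis that not all $\psi_i$ are trivial means $(a_1,\dots,a_m)\neq 0$.

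The key point is that $R$ is a nonconstant polynomial in $\F_q[y]$, provided $\Char\F_q$ is large enough. This is where linear independence over $\Q$ is used: the $P_i$ are linearly independent in $\Z[y]$, so if we let $D$ be the maximum of their degrees, the $(m)\times(D+1)$ coefficient matrix of $P_1,\dots,P_m$ has rank $m$ over $\Q$, hence some $m\times m$ minor is a nonzero integer; choosing $c$ larger than the absolute value of every such minor (there are finitely many, and they depend only on $P_1,\dots,P_m$), this minor remains nonzero mod $p$ for $p\geq c$, so the reductions $\bar P_1,\dots,\bar P_m$ stay linearly independent over $\F_p$. Since each $P_i$ has zero constant term, so does $R$; and since $(a_i)\neq 0$ and the $\bar P_i$ are linearly independent, $R\not\equiv 0$, hence $R$ is a nonconstant polynomial with $1\leq\deg R\leq D$. (One should also ensure $p>D$ so that $\deg R$ is not collapsed by the characteristic, though this is automatically handled by taking $c>D$.)

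Finally I would apply the Weil bound. The composition $\Tr\circ R\colon\F_q\to\F_p$ can be handled in one of two standard ways. One way: the Weil bound for the curve $z^p-z=R(y)$ (equivalently, for the Artin–Schreier cover it defines) gives $|\sum_{y\in\F_q}e_p(\Tr(R(y)))|\leq (\deg R-1)\sqrt q$ directly, as in Chapter 11 of~\cite{K}, once one checks that $R$ is not of the shape $g^p-g+\text{const}$ — which holds since $1\leq\deg R\leq D<p$ forces $\deg R\not\equiv 0\pmod p$, so $R$ cannot be a $p$-th power plus lower-degree Artin–Schreier terms. Thus $\E_y\prod_{i=1}^m\psi_i(P_i(y))\ll_D q^{-1/2}$, and since $D$ depends only on $P_1,\dots,P_m$, this is the claimed bound. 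The only real subtlety is the bookkeeping in the previous paragraph ensuring that $c$ can be chosen depending only on $P_1,\dots,P_m$ so that $R$ is genuinely nonconstant of degree less than $p$; once that is in place, the Weil bound does all the work, and indeed this is the only place in the paper where algebraic geometry enters.
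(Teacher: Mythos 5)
Your proof is correct and follows essentially the same route as the paper: classify the additive characters via the trace, collapse the product into a single sum $\E_y e_p(\Tr(R(y)))$ with $R=\sum_i a_iP_i$, use a nonvanishing minor of the coefficient matrix to keep $P_1,\dots,P_m$ linearly independent mod $p$ for $p\geq c$ (so $R$ is nonconstant), and finish with the Weil bound. Your extra remark ensuring $p>D$ so that $R$ is not of Artin--Schreier shape is a point the paper leaves implicit, so if anything you are slightly more careful.
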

\begin{proof}
Let $c>0$ be large enough so that $P_1,\dots,P_m$ are linearly independent modulo any prime larger than $c$. To see that such a $c$ exists, set $d:=\max_i\deg P_i$, form the $(d+1)\times m$ matrix $M$ of coefficients of $P_1,\dots,P_m$, let $C$ be a nonvanishing $m\times m$ minor of $M$ (which exists by the linear independence assumption on $P_1,\dots,P_m$), and just pick $c$ larger than all primes dividing $C$. Assume that $\F_q$ has characteristic $p\geq c$.

The additive characters of $\F_q$ are exactly the functions $x\mapsto e_p(\Tr_{\F_q/\F_p}(ax))$ for $a\in\F_q$. Since $\psi_j\neq 1$ for some $j=1,\dots,m$, there exist $a_1,\dots,a_m\in\F_q$ with some $a_j\neq 0$ such that $\psi_i(x)=e_p(\Tr_{\F_q/\F_p}(a_ix))$ for each $i=1,\dots,m$. Thus,
\[
\prod_{i=1}^{m}\psi_i(P_i(y))=e_p(\Tr_{\F_q/\F_p}(P(y))),
\]
where $P(y):=\sum_{i=1}^ma_iP_i(y)$, since $\Tr_{\F_q/\F_p}$ is $\F_p$-linear.

The polynomial $P$ is nonconstant since $P_1,\dots,P_m$ are linearly independent and $a_j\neq 0$. Thus, by the Weil bound, we have that
\[
\E_ye_p(\Tr_{\F_q/\F_p}(P(y)))\ll_{\deg{P}}q^{-1/2}\ll_{P_1,\dots,P_m}q^{-1/2},
\]
which completes the proof of the lemma.
\end{proof}
Now we can prove Theorem~\ref{main} in the $m_1=1$ case.
\begin{lemma}\label{base}
Let $m_2\geq 0$ and let $P_1,Q_1,\dots,Q_{m_2}\in\Z[y]_0$ be linearly independent over $\Q$. There exists $c>0$ such that if the characteristic of $\F_q$ is at least $c$, then
\[
|\Lambda_{P_1}^{Q_1,\dots,Q_{m_2}}(F;\Psi)-1_{\Psi=1}\prod_{i=0}^1\E_zf_i(z)|\ll_{P_1,Q_1,\dots,Q_{m_2}}q^{-1/2}
\]
whenever $F=(f_0,f_1)$ is $1$-bounded and $\Psi\in(\widehat{\F}_q)^{m_2}$.
\end{lemma}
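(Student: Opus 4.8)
The plan is to expand $\Lambda_{P_1}^{Q_1,\dots,Q_{m_2}}(F;\Psi)$ using the Fourier transform on $\F_q$ and reduce everything to the exponential sum estimate of Lemma~\ref{weil}. Write $f_0=\sum_{\psi}\hat f_0(\psi)\psi$ and $f_1=\sum_{\varphi}\hat f_1(\varphi)\varphi$, where the sums run over $\widehat{\F}_q$. Substituting into
\[
\Lambda_{P_1}^{Q_1,\dots,Q_{m_2}}(F;\Psi)=\E_{x,y}f_0(x)f_1(x+P_1(y))\prod_{j=1}^{m_2}\psi_j(Q_j(y)),
\]
the average over $x$ of $\psi(x)\varphi(x+P_1(y))=\psi(x)\varphi(x)\varphi(P_1(y))$ collapses (by orthogonality of characters) to a single diagonal term $\psi=\bar\varphi$, i.e. $\varphi=\bar\psi$. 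After this collapse one is left with
\[
\Lambda_{P_1}^{Q_1,\dots,Q_{m_2}}(F;\Psi)=\sum_{\psi\in\widehat{\F}_q}\hat f_0(\psi)\hat f_1(\bar\psi)\,\E_y\Bigl(\bar\psi(P_1(y))\prod_{j=1}^{m_2}\psi_j(Q_j(y))\Bigr).
\]
(Here I am using that $\varphi(P_1(y))$ with $\varphi=\bar\psi$ contributes $\bar\psi(P_1(y))$.)

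Next I split the sum according to whether the inner $y$-average is ``large'' or ``small''. The inner average is an exponential sum of the form $\E_y\chi(R(y))$ for an additive character $\chi$ of $\F_q$ and a polynomial $R$; it has modulus $\ll_{P_1,Q_1,\dots,Q_{m_2}}q^{-1/2}$ by the Weil bound unless the corresponding polynomial $R$ is constant. By the linear independence of $P_1,Q_1,\dots,Q_{m_2}$ over $\Q$ (hence mod $p$ once $p\ge c$, as in the proof of Lemma~\ref{weil}), the polynomial $-\psi$-weighted combination $-a\cdot P_1+\sum_j a_j Q_j$ (in the notation where $\psi\leftrightarrow a$, $\psi_j\leftrightarrow a_j$) is constant only when all of its coefficients vanish, which forces $a=0$ and $a_j=0$ for all $j$ — that is, $\psi=1$ and $\Psi=1$. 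So: if $\Psi\ne1$, every inner average is $\ll q^{-1/2}$, and if $\Psi=1$ then exactly the $\psi=1$ term has inner average equal to $1$ and all other terms have inner average $\ll q^{-1/2}$.

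In the case $\Psi=1$, the $\psi=1$ term contributes $\hat f_0(1)\hat f_1(1)=\bigl(\E_z f_0(z)\bigr)\bigl(\E_z f_1(z)\bigr)=\prod_{i=0}^{1}\E_z f_i(z)$, which is the main term, and we must show the remaining terms total $O(q^{-1/2})$. For this, bound the error sum by $q^{-1/2}\sum_{\psi}|\hat f_0(\psi)||\hat f_1(\bar\psi)|$ and apply Cauchy--Schwarz followed by Parseval (i.e. $\sum_\psi|\hat f_i(\psi)|^2=\|f_i\|_{L^2}^2\le\|f_i\|_{L^\infty}^2\le1$ since $F$ is $1$-bounded). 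This gives $\sum_{\psi}|\hat f_0(\psi)||\hat f_1(\bar\psi)|\le\|f_0\|_{L^2}\|f_1\|_{L^2}\le1$, so the error is $\ll_{P_1,Q_1,\dots,Q_{m_2}}q^{-1/2}$. The case $\Psi\ne1$ is identical except that there is no main term. Since all implied constants come from a single application of Lemma~\ref{weil}, they depend only on $P_1,Q_1,\dots,Q_{m_2}$, as required.

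The only real subtlety — and the one point to state carefully rather than the routine Fourier bookkeeping — is the reduction to Lemma~\ref{weil}: one must check that the relevant polynomial $-a P_1+\sum_j a_j Q_j\in\F_q[y]$ is nonconstant whenever $(a,a_1,\dots,a_{m_2})\ne(0,\dots,0)$, which is exactly linear independence mod $p$ and uses that the $Q_j$ and $P_1$ have zero constant term so that ``nonconstant'' is equivalent to ``nonzero.'' This is handled exactly as in the proof of Lemma~\ref{weil} by choosing $c$ larger than the primes dividing a suitable maximal minor of the coefficient matrix of $P_1,Q_1,\dots,Q_{m_2}$.
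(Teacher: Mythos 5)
Your proposal is correct and follows essentially the same route as the paper: Fourier expansion over the additive characters, orthogonality in the $x$-variable, the exponential sum bound of Lemma~\ref{weil} applied to the tuple $(\overline{\psi},\psi_1,\dots,\psi_{m_2})$ on the linearly independent polynomials $P_1,Q_1,\dots,Q_{m_2}$, and Cauchy--Schwarz plus Parseval to control the off-main-term contribution. The only (cosmetic) difference is that the paper first splits $f_1=\E_zf_1(z)+f_1'$ and treats the mean term separately, whereas you expand both functions and extract the $\psi=1$ diagonal term as the main term; the two bookkeeping schemes are equivalent.
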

\begin{proof}
By Lemma~\ref{weil}, there exists $c>0$ such that
\begin{equation}\label{charsum}
\E_{y}\phi(P_1(y))\prod_{i=1}^{m_2}\psi_{i}(Q_i(y))\ll_{P_1,Q_1,\dots,Q_{m_2}}q^{-1/2}
\end{equation}
whenever the characteristic of $\F_q$ is at least $c$ and $\phi,\psi_1,\dots,\psi_{m_2}\in\widehat{\F}_q$ are not all trivial.

Set $f_1':=f_1-\E_zf_1(z)$ and $F':=(f_0,f_1')$. Since $f_1=\E_zf_1(z)+f_1'$ and $\Lambda_{P_1}^{Q_1,\dots,Q_{m_2}}$ is bilinear, we have
\begin{align*}
\Lambda_{P_1}^{Q_1,\dots,Q_{m_2}}(F;\Psi) &= (\E_{z}f_1(z))\E_{x,y}f_0(x)\prod_{j=1}^{m_2}\psi_j(Q_j(y))+\Lambda_{P_1}^{Q_1,\dots,Q_{m_2}}(F';\Psi) \\
&= \left(\prod_{i=0}^1\E_zf_i(z)\right)\E_y\prod_{j=1}^{m_2}\psi_j(Q_j(y))+\Lambda_{P_1}^{Q_1,\dots,Q_{m_2}}(F';\Psi).
\end{align*}

 Assume that $\F_q$ has characteristic at least $c$. If $\psi_j=1$ for all $j=1,\dots,m_2$, then $\E_{y}\prod_{j=1}^{m_2}\psi_j(Q_j(y))=1$. Otherwise, $\E_y\prod_{j=1}^{m_2}\psi_j(Q_j(y))\ll_{Q_1,\dots,Q_{m_2}}q^{-1/2}$ by~(\ref{charsum}). Thus,
\[
\E_y\prod_{j=1}^{m_2}\psi_j(Q_j(y))=1_{\Psi=1}+O_{Q_1,\dots,Q_{m_2}}(q^{-1/2}),
\]
and since $f_0$ and $f_1$ are $1$-bounded, this implies that
\[
\left(\prod_{i=0}^1\E_zf_i(z)\right)\E_y\prod_{j=1}^{m_2}\psi_j(Q_j(y))=1_{\Psi=1}\prod_{i=0}^1\E_zf_i(z)+O_{Q_1,\dots,Q_{m_2}}(q^{-1/2}).
\]

Now, by Fourier inversion, we have
\begin{align*}
\Lambda_{P_1}^{Q_1,\dots,Q_{m_2}}(F';\Psi) &= \sum_{\eta_0,\eta_1\in\widehat{\F}_q}\widehat{f_0}(\eta_0)\widehat{f'_1}(\eta_1)\left(\E_{x}\eta_0(x)\eta_1(x)\right)\left(\E_y\eta_1(P_1(y))\prod_{j=1}^{m_2}\psi_j(Q_j(y))\right) \\
&= \sum_{1\neq\eta\in\widehat{\F}_q}\widehat{f_0}(\eta)\overline{\widehat{f'_1}(\eta)}\left(\E_y\overline{\eta(P_1(y))}\prod_{j=1}^{m_2}\psi_j(Q_j(y))\right),
\end{align*}
since $f_1'$ has mean zero. By~(\ref{charsum}) again,
\[
\E_y\overline{\eta(P(y))}\prod_{j=1}^{m_2}\psi_j(Q_j(y))\ll_{P_1,Q_1,\dots,Q_{m_2}}q^{-1/2}
\]
whenever $\eta\neq 1$. Since $f_0$ is $1$-bounded and $\|f_1'\|_\infty\leq 2$, we have that $\sum_{1\neq\eta\in\widehat{\F}_q}\widehat{f_0}(\eta)\overline{\widehat{f'_1}(\eta)}\ll 1$ by Cauchy--Schwarz and Parseval's identity. Hence, $|\Lambda_{P_1}^{Q_1,\dots,Q_{m_2}}(F';\Psi)|\ll_{P_1,Q_1,\dots,Q_{m_2}}q^{-1/2}$.
\end{proof}

\section{The inductive step and the proof of Theorem~\ref{main}}\label{pf}

To prove Theorem~\ref{main} in general, we proceed by induction on $m_1$. The idea of the proof is that if
\[
|\Lambda_{P_1,\dots,P_{m_1}}^{Q_1,\dots,Q_{m_2}}(F;\Psi)|\leq\min_i\|f_i\|_{U^s}^\beta+O(q^{-\beta})
\]
for some $1\geq\beta>0$ and $s\in\N$, $s\geq 2$, then we can combine Proposition~\ref{hb} with the conclusion of Theorem~\ref{main} with $(m_1-1,m_2+1)$ in place of $(m_1,m_2)$ to bound $\Lambda_{P_1,\dots,P_{m_1}}^{Q_1,\dots,Q_{m_2}}(F;\Psi)$ in terms of the $U^{s-1}$-norm of the $f_i$'s. We then deduce a bound for $\Lambda_{P_1,\dots,P_{m_1}}^{Q_1,\dots,Q_{m_2}}(F;\Psi)$ in terms of the $U^1$-norm of the $f_i$'s by repeating this $s-2$ more times.

\subsection{A simplified example of the argument}
Suppose, for the sake of illustration, that
\begin{equation}\label{ex}
|\Lambda_{P_1,P_2}(F)|\leq\min_{i}\|f_i\|_{U^2}^{1/4}+q^{-1/4}
\end{equation}
for every $1$-bounded $F=(f_0,f_1,f_2)$. The purpose of this subsection is to give the simplest possible demonstration of how we can turn a bound for $\Lambda$ in terms of the $U^s$-norm into one in terms of the $U^{s-1}$-norm, so that the proof of Theorem~\ref{main} will hopefully be easier to follow. We do not claim that~(\ref{ex}) actually holds.

Let $\delta_1,\delta_2,\delta_3,\delta_4>0$ with $\delta_2<\delta_3$ and $\delta_4<\delta_1$, to be chosen later. Assume that $q$ is large enough so that $q^{\delta_2-\delta_3}+q^{\delta_4-\delta_1}\leq 1/2$. Let $F=(f_0,f_1,f_2)$ be $1$-bounded. By Proposition~\ref{hb}, we can write
\[
f_0=f_a+f_b+f_c
\]
for some $f_a,f_b,f_c:\F_q\to\C$ with $\|f_a\|_{U^2}^*\leq q^{\delta_1}$, $\|f_b\|_{L^1}\leq q^{-\delta_2}$, $\|f_c\|_{L^\infty}\leq q^{\delta_3}$, and $\|f_c\|_{U^2}\leq q^{-\delta_4}$. Then
\[
\Lambda_{P_1,P_2}(f_0,f_1,f_2)=\Lambda_{P_1,P_2}(f_a,f_1,f_2)+\Lambda_{P_1,P_2}(f_b,f_1,f_2)+\Lambda_{P_1,P_2}(f_c,f_1,f_2)
\]
using the trilinearity of $\Lambda_{P_1,P_2}$.

The term $\Lambda_{P_1,P_2}(f_b,f_1,f_2)$ is the simplest to handle. We use the triangle inequality and the $1$-boundedness of $f_1$ and $f_2$ to bound $|\Lambda_{P_1,P_2}(f_b,f_1,f_2)|$ by
\[
\E_{x,y}|f_b(x)||f_1(x+P_1(y))||f_2(x+P_2(y))|\leq \|f_b\|_{L^1}\leq q^{-\delta_2}.
\]

To bound $\Lambda_{P_1,P_2}(f_c,f_1,f_2)$, note that
\[
\Lambda_{P_1,P_2}(f_c,f_1,f_2)=q^{\delta_3}\Lambda_{P_1,P_2}\left(q^{-\delta_3}f_c,f_1,f_2\right)
\]
and that $q^{-\delta_3}f_c$ is $1$-bounded. Since $f_1$ and $f_2$ are $1$-bounded as well, we get from~(\ref{ex}) that
\[
\Lambda_{P_1,P_2}\left(q^{-\delta_3}f_c,f_1,f_2\right)\leq\|q^{-\delta_3}f_c\|_{U^2}^{1/4}+q^{-1/4}\leq q^{-(\delta_3+\delta_4)/4}+q^{-1/4}.
\]
Thus, $\Lambda_{P_1,P_2}(f_c,f_1,f_2)\leq q^{3\delta_3/4-\delta_4/4}+q^{\delta_3-1/4}$.

Finally, to bound $\Lambda_{P_1,P_2}(f_a,f_1,f_2)$, we set
\[
g(x):=\E_{y}f_1(x+P_1(y))f_2(x+P_2(y)),
\]
so that $\Lambda_{P_1,P_2}(f_a,f_1,f_2)=\E_xf_a(x)g(x)$. Since  $|\E_xf_a(x)g(x)|\leq \|f_a\|_{U^2}^*\|g\|_{U^2}$, this implies that $|\Lambda_{P_1,P_2}(f_a,f_1,f_2)|\leq q^{\delta_1}\|g\|_{U^2}$. Now, $\|g\|_{U^2}^2\leq\max_{\psi\in\widehat{\F}_q}|\widehat{g}(\psi)|$ by~(\ref{u2}) because $g$ is $1$-bounded. But for every $\psi\in\widehat{\F}_q$ we have
\begin{align*}
\hat{g}(\psi) &= \E_{x,y}\overline{\psi(x)}f_1(x+P_1(y))f_2(x+P_2(y)) \\
&= \E_{x,y}\overline{\psi(x-P_1(y))}f_1(x)f_2(x+P_2(y)-P_1(y)) \\
&= \E_{x,y}(\overline{\psi}f_1)(x)f_2(x+P_2(y)-P_1(y))\psi(P_1(y)) \\
&= \Lambda_{P_2-P_1}^{P_1}(\overline{\psi} f_1,f_2;\psi),
\end{align*}
which we can estimate using Lemma~\ref{base}, since if $P_1$ and $P_2$ are linearly independent, then so are $P_2-P_1$ and $P_1$. Lemma~\ref{base} tells us that
\begin{equation}\label{exa1}
\Lambda_{P_2-P_1}^{P_1}(\overline{\psi} f_1,f_2;\psi)=1_{\psi=1}(\E_zf_1(z))(\E_z f_2(z))+O_{P_1,P_2}(q^{-1/2})
\end{equation}
whenever the characteristic of $\F_q$ is sufficiently large.

Since $f_1$ and $f_2$ are $1$-bounded, we can bound $|1_{\psi=1}(\E_zf_1(z))(\E_z f_2(z))|$ above by $\min_{i=1,2}\|f_i\|_{U^1}$. This shows that
\[
|\Lambda_{P_1,P_2}(f_a,f_1,f_2)|\leq q^{\delta_1}(\min_{i=1,2}\|f_i\|_{U^1}^{1/2}+O_{P_1,P_2}(q^{-1/4}))
\]
whenever~(\ref{exa1}) holds, since $x_1^{1/2}+x_2^{1/2}>(x_1+x_2)^{1/2}$ for all $x_1,x_2>0$. Thus,
\begin{equation}\label{f2}
|\Lambda_{P_1,P_2}(f_0,f_1,f_2)|\leq q^{\delta_1}\min_{i=1,2}\|f_i\|_{U^1}^{1/2}+O_{P_1,P_2}(q^{\delta_1-1/4}+q^{-\delta_2}+q^{3\delta_3/4-\delta_4/4}+q^{\delta_3-1/4})
\end{equation}
whenever $q$ and the characteristic of $\F_q$ are large enough so that $q^{\delta_2-\delta_3}+q^{\delta_4-\delta_1}\leq 1/2$ and~(\ref{exa1}), respectively, hold.

Now write $f_2':=f_2-\E_zf_2(z)$, so that $\f{1}{2}f_2'$ is $1$-bounded and has mean zero (i.e., $\|\f{1}{2}f_2'\|_{U^1}=0$). Then
\[
\Lambda_{P_1,P_2}(f_0,f_1,f_2)=\Lambda_{P_1}(f_0,f_1)\E_zf_2(z)+2\Lambda_{P_1,P_2}\left(f_0,f_1,\f{1}{2}f_2'\right).
\]
We have, by Lemma~\ref{base}, that
\begin{equation}\label{exa2}
\Lambda_{P_1}(f_0,f_1)=(\E_zf_0(z))(\E_zf_1(z))+O_{P_1}(q^{-1/2})
\end{equation}
whenever the characteristic of $\F_q$ is large enough, and, by~(\ref{f2}), that
\[
\left|\Lambda_{P_1,P_2}\left(f_0,f_1,\f{1}{2}f_2'\right)\right|\ll_{P_1,P_2}q^{\delta_1-1/4}+q^{-\delta_2}+q^{3\delta_3/4-\delta_4/4}+q^{\delta_3-1/4}
\]
whenever $q$ and the characteristic of $\F_q$ are large enough.

In order to bound the above by a negative power of $q$, we must choose $\delta_1,\delta_2,\delta_3,\delta_4>0$ with $\delta_2<\delta_3$, $\delta_4<\delta_1$, $\delta_1<1/4$, $3\delta_3/4<\delta_4/4$, and $\delta_3<1/4$. One simple choice that works is $\delta_1=1/8$, $\delta_2=1/256$, $\delta_3=1/128$, and $\delta_4=1/16$, so that $q^{\delta_1-1/4}+q^{-\delta_2}+q^{3\delta_3/4-\delta_4/4}+q^{\delta_3-1/4}\ll q^{-1/256}$.

We conclude, under the assumption~(\ref{ex}), that if $q$ is large enough so that $q^{-1/256}+q^{-1/16}\leq 1/2$ and the characteristic of $\F_q$ is large enough so that~(\ref{exa1}) and~(\ref{exa2}) hold, then
\[
\Lambda_{P_1,P_2}(f_0,f_1,f_2)=\prod_{i=0}^2\E_zf_i(z)+O_{P_1,P_2}(q^{-1/256}).
\]

\subsection{Proof of Theorem~\ref{main}}
Lemma~\ref{ind} below describes in general a bound for $\Lambda_{P_1,\dots,P_{m_1}}^{Q_1,\dots,Q_{m_2}}(F;\Psi)$ in terms of the $U^{s-1}$-norm that one can derive from any bound in terms of the $U^s$-norm, assuming that the conclusion of Theorem~\ref{main} holds for the pair $(m_1-1,m_2+1)$. The proof of the lemma, which we postpone to the next section, is modeled after the argument given in the previous subsection. However, the argument is not nearly as straightforward if a $U^s$-norm with $s>2$ is involved, which is the typical situation. One can still argue in a similar manner to get an upper bound for $\Lambda_{P_1,\dots,P_m}$ in terms of the $U^s$-norm of the dual function. The key to the remainder of the proof is a lemma that returns us to the $U^2$ situation, which allows us to avoid the use of the $U^s$-inverse theorem when $s>2$.

The statement of Lemma~\ref{ind} is long and involves many parameters, which are necessary to run the induction argument in the proof of Theorem~\ref{main}. Ignoring the parameters, the basic idea of the lemma is that if one has a bound for $\Lambda$ in terms of the $U^s$-norm, then one also has a bound in terms of the $U^{s-1}$-norm, plus some error. We apply the lemma repeatedly to prove Theorem~\ref{main}, and then at the end of the proof select the $\delta_1,\delta_2,\delta_3,$ and $\delta_4$ parameters in each iteration of the lemma so that this error decays polynomially in $1/q$.
\begin{lemma}\label{ind}
Let $m_1\geq 2$ and $m_2\geq 0$. Assume that for all linearly independent $R_1,\dots,$ $R_{m_1-1},S_1,\dots,S_{m_2+1}\in\Z[y]_0$ there exist $c_1,c_2,\gamma>0$ such that
\begin{equation}\label{m1}
\big|\Lambda_{R_1,\dots,R_{m_1-1}}^{S_1,\dots,S_{m_2+1}}(G;\Phi)-1_{\Phi=1}\prod_{i=0}^{m_1-1}\E_xg_i(x)\big|\leq\f{c_2}{q^\gamma}
\end{equation}
for every $1$-bounded $G=(g_0,\dots,g_{m_1-1})$ and $\Phi\in(\widehat{\F}_q)^{m_2+1}$, whenever the characteristic of $\F_q$ is at least $c_1$.

Suppose that $P_1,\dots,P_{m_1},Q_1,\dots,Q_{m_2}\in\Z[y]_0$ are linearly independent and that there exist $b_1,b_2,b_3,b_4>0$ and $s\in\N$, $s\geq 2$, such that
\begin{equation}\label{s}
|\Lambda_{P_1,\dots,P_{m_1}}^{Q_1,\dots,Q_{m_2}}(F;\Psi)|\leq b_1\min_j\|f_j\|_{U^s}^{b_2}+b_3
\end{equation}
for every $1$-bounded $F=(f_0,\dots,f_{m_1})$ and $\Psi\in(\widehat{\F}_q)^{m_2}$, whenever the characteristic of $\F_q$ is at least $b_4$. Then there exist $c_1',c_2',\gamma'>0$ depending only on $P_1,\dots,P_{m_1},Q_1,\dots,Q_{m_2}$ (and not on $b_1,b_2,b_3,b_4,$ or $s$) such that, whenever $F=(f_0,\dots,f_{m_1})$ is $1$-bounded, $\Psi\in(\widehat{\F}_q)^{m_2}$, and the characteristic of $\F_q$ is at least $\max(c_1',b_4)$, we have that $|\Lambda_{P_1,\dots,P_{m_1}}^{Q_1,\dots,Q_{m_2}}(F;\Psi)|$ is bounded above by
\[
q^{\delta_1}\min_{i\geq 0}\|f_i\|_{U^{s-1}}^{2^{1-s}}+q^{\delta_1}\left(\f{c_2'}{q^{\gamma'}}\right)^{2^{2-2s}}+q^{-\delta_2}+q^{(1-b_2)\delta_3-b_2\delta_4}b_1+q^{\delta_3}b_3
\]
for every $\delta_1,\delta_2,\delta_3,\delta_4>0$ satisfying $q^{\delta_2-\delta_3}+q^{\delta_4-\delta_1}\leq 1/2$.
\end{lemma}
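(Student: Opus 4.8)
The plan is to imitate the model computation of the previous subsection, but working with a general $U^s$-norm in place of the $U^2$-norm. Throughout write $\Psi=(\psi_1,\dots,\psi_{m_2})$. First I would apply Proposition~\ref{hb} with the norm $\|\cdot\|:=\|\cdot\|_{U^s}$ and with $f:=f_0$; this is legitimate since $f_0$ is $1$-bounded, hence $\|f_0\|_{L^2}\leq 1$, and the stated condition $q^{\delta_2-\delta_3}+q^{\delta_4-\delta_1}\leq 1/2$ is exactly the hypothesis of that proposition. This produces $f_0=f_a+f_b+f_c$ with $\|f_a\|_{U^s}^*\leq q^{\delta_1}$, $\|f_b\|_{L^1}\leq q^{-\delta_2}$, $\|f_c\|_{L^\infty}\leq q^{\delta_3}$, and $\|f_c\|_{U^s}\leq q^{-\delta_4}$. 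By multilinearity of $\Lambda_{P_1,\dots,P_{m_1}}^{Q_1,\dots,Q_{m_2}}(\,\cdot\,;\Psi)$ in its first slot, I would split $\Lambda_{P_1,\dots,P_{m_1}}^{Q_1,\dots,Q_{m_2}}(F;\Psi)$ into the three averages obtained by replacing $f_0$ with $f_a$, $f_b$, and $f_c$.

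The $f_b$- and $f_c$-terms are the easy ones. For $f_b$, the triangle inequality and the $1$-boundedness of the remaining factors give a bound of $\|f_b\|_{L^1}\leq q^{-\delta_2}$. For $f_c$, I would write the average as $q^{\delta_3}$ times the same average with $q^{-\delta_3}f_c$ in place of $f_c$; since $q^{-\delta_3}f_c$ is $1$-bounded I may apply hypothesis~(\ref{s}), and using $\|q^{-\delta_3}f_c\|_{U^s}\leq q^{-\delta_3-\delta_4}$ this term is at most $b_1q^{(1-b_2)\delta_3-b_2\delta_4}+b_3q^{\delta_3}$.

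Everything then reduces to the $f_a$-term. Writing $g(x):=\E_y\prod_{i=1}^{m_1}f_i(x+P_i(y))\prod_{j=1}^{m_2}\psi_j(Q_j(y))$, which is $1$-bounded, the $f_a$-term equals $\langle f_a,\bar g\rangle$ and so is at most $\|f_a\|_{U^s}^*\|g\|_{U^s}\leq q^{\delta_1}\|g\|_{U^s}$. Thus it suffices to prove
\[
\|g\|_{U^s}\leq\min_{0\leq i\leq m_1}\|f_i\|_{U^{s-1}}^{2^{1-s}}+(c_2'/q^{\gamma'})^{2^{2-2s}}
\]
for suitable $c_2',\gamma'>0$ depending only on the polynomials. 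This is the heart of the lemma and the step where hypothesis~(\ref{m1}) is used. To prove it I would fix $i_0$; first take $1\leq i_0\leq m_1$. Expanding the definition of $\|g\|_{U^s}$ and applying Cauchy--Schwarz $s-2$ times, together with $1$-boundedness of $g$, I would reduce to "the $U^2$ situation":
\[
\|g\|_{U^s}^{2^s}\leq\E_{h_1,\dots,h_{s-2}}\ \max_{\psi\in\widehat\F_q}\big|\widehat{\Delta_{h_1,\dots,h_{s-2}}g}(\psi)\big|^2 .
\]
For each fixed $h_1,\dots,h_{s-2}$ and $\psi$, I would substitute the definition of $g$ into $\widehat{\Delta_{h_1,\dots,h_{s-2}}g}(\psi)$ and perform the change of variables $x\mapsto x-P_{i_0}(y)$; this recasts the Fourier coefficient as a progression average governed by the $m_1-1$ polynomials $\{P_i-P_{i_0}\}_{i\neq i_0}$ together with the $m_2+1$ polynomials $\{P_{i_0},Q_1,\dots,Q_{m_2}\}$, the characters $\psi,\psi_1,\dots,\psi_{m_2}$ being attached to the latter (for $s=2$ there are no derivative variables and this is simply $\hat g(\psi)$; for $s>2$ one must additionally absorb the extra factors of $g$ produced by $\Delta_{h_1,\dots,h_{s-2}}$ into the function slots, which is the most delicate point). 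Since $\{P_i-P_{i_0}\}_{i\neq i_0}\cup\{P_{i_0}\}\cup\{Q_j\}_j$ is linearly independent over $\Q$ and contained in $\Z[y]_0$, hypothesis~(\ref{m1}) applies and bounds this quantity by $1_{\psi=1}$ times a product of means of (translates of) the $f_i$'s, up to an error $O(c_2'/q^{\gamma'})$; since such a product of means has absolute value at most $\|f_{i_0}\|_{U^1}\leq\|f_{i_0}\|_{U^{s-1}}$, unwinding the Cauchy--Schwarz inequalities yields the displayed bound on $\|g\|_{U^s}$, the exponents $2^{1-s}$ and $2^{2-2s}$ coming from the repeated squaring. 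The case $i_0=0$ is handled by first applying the change of variables $x\mapsto x-P_{m_1}(y)$ to $\Lambda_{P_1,\dots,P_{m_1}}^{Q_1,\dots,Q_{m_2}}(F;\Psi)$, which rewrites it as a progression average in which $f_0$ sits in a progression slot (with polynomial $-P_{m_1}$) and $f_{m_1}$ in the base slot, and then decomposing $f_{m_1}$ in place of $f_0$ and running the same argument.

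Summing the three contributions gives the asserted bound on $|\Lambda_{P_1,\dots,P_{m_1}}^{Q_1,\dots,Q_{m_2}}(F;\Psi)|$, and one takes $c_1',c_2',\gamma'$ to be the worst-case constants furnished by~(\ref{m1}) over the finitely many shifted polynomial tuples that arise (one for each $i_0\in\{0,1,\dots,m_1\}$); these depend only on $P_1,\dots,P_{m_1},Q_1,\dots,Q_{m_2}$, and not on $b_1,\dots,b_4$ or $s$, as required. The main obstacle is the estimate for $\|g\|_{U^s}$: one must carry out the reduction to the $U^2$ situation and the subsequent change of variables carefully enough that, after invoking~(\ref{m1}), no power of $q$ is lost from the main term despite the many auxiliary summation variables; this "degree-lowering" mechanism is the most delicate part of the argument, and is precisely what replaces the use of the $U^s$-inverse theorem for $s>2$.
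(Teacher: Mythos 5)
Your outer skeleton (decompose $f_0$ via Proposition~\ref{hb} with the $U^s$-norm, bound the $f_b$ and $f_c$ pieces exactly as you do, and reduce the $f_a$ piece to bounding $\|g\|_{U^s}$ for the dual function $g$) matches the paper, as does your idea of handling the remaining index by re-basing the average and decomposing a different $f_i$. The gap is in the one step you yourself flag as ``the most delicate point,'' and it is not a technicality: for $s>2$ your reduction to the $U^2$ situation differences $g$ itself, and the Fourier coefficients $\widehat{\Delta_{h_1,\dots,h_{s-2}}g}(\psi)$ cannot be written as averages of the form $\Lambda_{R_1,\dots,R_{m_1-1}}^{S_1,\dots,S_{m_2+1}}$, so hypothesis~(\ref{m1}) never becomes applicable. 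Indeed, $\Delta_{h_1,\dots,h_{s-2}}g(x)$ is a product of $2^{s-2}$ shifted copies of $g$, each carrying its own independent averaging variable: expanding them all produces $2^{s-2}$ independent $y$-variables and destroys the single-$y$ progression structure; keeping all but one copy unexpanded and substituting $x\mapsto x-P_{i_0}(y)$ turns each unexpanded copy into $g(x-P_{i_0}(y)+\omega\cdot h)$, i.e.\ an extra progression slot with the same polynomial $-P_{i_0}$ repeated $2^{s-2}-1$ times (so neither the count $m_1-1$ nor linear independence holds); and absorbing the unexpanded copies into the base slot before substituting merely reproduces an average of the original shape $\Lambda_{P_1,\dots,P_{m_1}}^{Q_1,\dots,Q_{m_2}}$ with a modified $f_0$, which is circular. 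The missing idea is the paper's Lemma~\ref{cs}: repeated Cauchy--Schwarz gives $\|g\|_{U^s}^{2^{2s-2}}\leq\E_{h_1,\dots,h_{s-2}}\|g_{h_1,\dots,h_{s-2}}\|_{U^2}^4$, where $g_{h_1,\dots,h_{s-2}}(x):=\E_y\prod_{i}\Delta_{h_1,\dots,h_{s-2}}f_i(x+P_i(y))\prod_j\phi_j(Q_j(y))$ is \emph{not} the same as $\Delta_{h_1,\dots,h_{s-2}}g$; the differencing is pushed onto the $f_i$ inside a single $y$-average (incidentally trivializing the characters $\psi_j$ when $s>2$), which preserves exactly the shape to which~(\ref{m1}) applies after the shift $x\mapsto x-P_k(y)$.

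A second, related slip occurs in your bookkeeping of the main term: after invoking~(\ref{m1}), the product of means that appears involves the \emph{differenced} functions $\Delta_{h_1,\dots,h_{s-2}}f_i$, not translates of the $f_i$, and $|\E_z\Delta_{h_1,\dots,h_{s-2}}f_i(z)|$ is not bounded by $\|f_i\|_{U^1}$ (take $f_i$ a nontrivial additive character: its mean is $0$, yet its differences have mean of modulus $1$). The correct route bounds the main term by $\min_{i}|\E_z\Delta_{h_1,\dots,h_{s-2}}f_i(z)|$, squares, averages over $h_1,\dots,h_{s-2}$, and uses the identity $\E_{h_1,\dots,h_{s-2}}\|\Delta_{h_1,\dots,h_{s-2}}f_i\|_{U^1}^2=\|f_i\|_{U^{s-1}}^{2^{s-1}}$; this is precisely where the $U^{s-1}$-norm and the exponents $2^{1-s}$ and $2^{2-2s}$ in the stated bound come from, and it cannot be short-circuited to a $U^1$ bound as your sketch suggests.
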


We can now prove Theorem~\ref{main}.

\begin{proof}[Proof of Theorem~\ref{main}]
We proceed by induction on $m_1$. Lemma~\ref{base} provides the base case for the induction, so let $m_1\geq 2$ and assume that we have proved the theorem for any pair $(m_1',m_2')$ with $m_1'<m_1$.

First, we have by Proposition~\ref{PET2} that
\begin{equation}\label{start}
|\Lambda_{P_1,\dots,P_{m_1}}^{Q_1,\dots,Q_{m_2}}(F;\Psi)|\leq \min_{i}\|f_i\|_{U^s}^\beta+O_{P_1,\dots,P_{m_1},Q_1,\dots,Q_{m_2}}(q^{-\beta})
\end{equation}
for some $1\geq \beta>0$ and $s\in\N$, $s\geq 2$, depending only on the polynomials $P_1,\dots,P_{m_1},Q_1,\dots,Q_{m_2}$.

Let $\delta_{k}^{(\ell)}>0$, $1\leq k\leq 4$ and $2\leq\ell\leq s$, be parameters to be chosen later that satisfy $\delta_2^{(\ell)}<\delta_3^{(\ell)}$ and $\delta_4^{(\ell)}<\delta_1^{(\ell)}$ for each $\ell$. Starting with the bound~(\ref{start}), we apply Lemma~\ref{ind} repeatedly to bound $|\Lambda_{P_1,\dots,P_{m_1}}^{Q_1,\dots,Q_{m_2}}(F;\Psi)|$, using the parameters $\delta_1^{(\ell)},\delta_2^{(\ell)},\delta_3^{(\ell)},\delta_4^{(\ell)}$ to move from a bound
\[
|\Lambda_{P_1,\dots,P_{m_1}}^{Q_1,\dots,Q_{m_2}}(F;\Psi)|\leq b_1^{(\ell)}\min_{i}\|f_i\|_{U^{\ell}}^{b_2^{(\ell)}}+b_3^{(\ell)}
\]
in terms of the $U^{\ell}$-norm to a bound
\[
|\Lambda_{P_1,\dots,P_{m_1}}^{Q_1,\dots,Q_{m_2}}(F;\Psi)|\leq b_1^{(\ell-1)}\min_i\|f_i\|_{U^{\ell-1}}^{b_2^{(\ell-1)}}+b_3^{(\ell-1)}
\]
in terms of the $U^{\ell-1}$-norm, where $b_1^{(\ell-1)}=2q^{\delta_1^{(\ell)}}$, $b_2^{(\ell-1)}=2^{1-\ell}$, and
\begin{equation}\label{b3}
b_3^{(\ell-1)}\ll_{P_1,\dots,P_{m_1},Q_1,\dots,Q_{m_2}}q^{(1-b_2^{(\ell)})\delta_3^{(\ell)}-b_2^{(\ell)}\delta_4^{(\ell)}}b_1^{(\ell)}+q^{-\delta_2^{(\ell)}}+q^{\delta_1^{(\ell)}-\gamma 2^{2-2\ell}}+q^{\delta_3^{(\ell)}}b_3^{(\ell)}.
\end{equation}
Since $b_1^{(1)}=q^{\delta_1^{(2)}}$ and $b_2^{(1)}=1/2$, this leads to the bound
\[
|\Lambda_{P_1,\dots,P_{m_1}}^{Q_1,\dots,Q_{m_2}}(F;\Psi)|\leq 2q^{\delta_1^{(2)}}\min_{i}\|f_i\|_{U^1}^{1/2}+b_3^{(1)}
\]
whenever the characteristic of $\F_q$ is sufficiently large depending on the polynomials $P_1,\dots,P_{m_1},Q_1,\dots,Q_{m_2}$ and the $\delta_k^{(\ell)}$'s. 

Let $f_{m_1}':=f_{m_1}-\E_xf_{m_1}(x)$, so that $f_{m_1}'$ has mean zero and $\f{1}{2}f_{m_1}'$ is $1$-bounded. Then we have that
\begin{align*}
|\Lambda_{P_1,\dots,P_{m_1}}^{Q_1,\dots,Q_{m_2}}(F;\Psi)-\E_xf_{m_1}(x)\Lambda_{P_1,\dots,P_{m_1-1}}^{Q_1,\dots,Q_{m_2}}(f_0,\dots,f_{m_1-1};\Psi)|&\leq 2|\Lambda_{P_1,\dots,P_{m_1}}^{Q_1,\dots,Q_{m_2}}(f_0,\dots,f_{m_1-1},\f{1}{2}f_{m_1}';\Psi)| \\
&\leq 2b_3^{(1)}
\end{align*}
whenever the characteristic of $\F_q$ is sufficiently large, since $\|\f{1}{2}f_{m_1}'\|_{U^1}=0$. Define the function $\exp_q:\R\to\R$ by $\exp_q(u)=q^u$ for ease of notation.  Applying the bound~(\ref{b3}) recursively we get that
\begin{align*}
b_3^{(1)}\ll_{P_1,\dots,P_{m_1},Q_1,\dots,Q_{m_2}}&b_3^{(s)}\exp_q\left(\sum_{i=0}^{s-2}\delta_3^{(s-i)}\right)+\\
&\sum_{j=0}^{s-2}b_1^{(s-j)}\exp_q\left((1-b_2^{(2-j)})\delta_3^{(s-j)}-b_2^{(s-j)}\delta_4^{(s-j)}+\sum_{i=j+1}^{s-2}\delta_3^{(s-i)}\right)+ \\
&\sum_{j=0}^{s-2}\exp_q\left(-\delta_2^{(s-j)}+\sum_{i=j+1}^{s-2}\delta_3^{(s-i)}\right)+\\
&\sum_{j=0}^{s-2}\exp_q\left(\delta_1^{(s-j)}-\gamma 2^{2-2(s-j)}+\sum_{i=j+1}^{s-2}\delta_3^{(s-i)}\right),
\end{align*}
and thus, using that $b_1^{(s)}=1$, $b_2^{(s)}=\beta$, $b_3^{(s)}\ll_{P_1,\dots,P_{m_1}, Q_1,\dots,Q_{m_2}}q^{-\beta}$, $b_1^{(s-j)}=2q^{\delta_1^{(s-j+1)}}$, and $b_2^{(s-j)}=2^{1-(s-j+1)}$ when $j>0$, that
\begin{align*}
b_3^{(1)}\ll_{P_1,\dots,P_{m_1},Q_1,\dots,Q_{m_2}}&\exp_q\left(-\beta+\sum_{i=0}^{s-2}\delta_3^{(s-i)}\right)+\exp_q\left((1-\beta)\delta_3^{(s)}-\beta\delta_4^{(s)}+\sum_{i=1}^{s-2}\delta_3^{(s-i)}\right) + \\
&\sum_{j=1}^{s-2}\exp_q\left(\delta_1^{(s-j+1)}+(1-2^{1-(s-j+1)})\delta_3^{(s-j)}-2^{1-(s-j+1)}\delta_4^{(s-j)}+\sum_{i=j+1}^{s-2}\delta_3^{(s-i)}\right) + \\
&\sum_{j=0}^{s-2}\exp_q\left(-\delta_2^{(s-j)}+\sum_{i=j+1}^{s-2}\delta_3^{(s-i)}\right)+\\
&\sum_{j=0}^{s-2}\exp_q\left(\delta_1^{(s-j)}-2^{2-2(s-j)}\gamma+\sum_{i=j+1}^{s-2}\delta_3^{(s-i)}\right).
\end{align*}

It remains to choose the $\delta_k^{(\ell)}$'s so that the above bound for $b_3^{(1)}$ is smaller than a negative power of $q$. One simple choice that works is
\begin{equation}\label{deltachoice}
\delta_{k}^{(\ell)}=\begin{cases} 2^{1-2s\ell}\gamma\beta & k=1 \\ 2^{2\ell-4s^2}\gamma\beta^2 & k=2 \\ 2^{1+2\ell-4s^2}\gamma\beta^2 & k=3 \\ 2^{-2s\ell}\gamma\beta& k=4 \end{cases}.
\end{equation}
Note that our definition of $\delta_k^{(\ell)}$ depends only on $s,\beta,$ and $\gamma$, which each depend only on $P_1,\dots,P_{m_1},Q_1,\dots,Q_{m_2}$.

Clearly $\delta_2^{(\ell)}<\delta_3^{(\ell)}$ and $\delta_4^{(\ell)}<\delta_1^{(\ell)}$ for all $2\leq\ell\leq s$, and we can easily verify that each of the five exponents of $q$ appearing in our bound for $\Lambda_{P_1,\dots,P_{m_1}}^{Q_1,\dots,Q_{m_2}}(f_0,\dots,f_{m_1-1},\f{1}{2}f_{m_1}';\Psi)$ are negative. Indeed, for the argument of the first $\exp_q$ in the sum bounding $b_3^{(1)}$ from above, recalling that $s\geq 2$, we have,
\begin{align*}
-\beta+\sum_{i=0}^{s-2}\delta_3^{(s-i)} &= -\beta\left(1-\gamma\beta\sum_{i=0}^{s-2}2^{1+2(s-i)-4s^2}\right) \\
&< -\beta\left(1-2^{-4s^2+2s+2}\right) \\
&\leq -\beta\left(1-2^{-10}\right),
\end{align*}
for the argument of the second $\exp_q$, we have
\begin{align*}
(1-\beta)\delta_3^{(s)}-\beta\delta_4^{(s)}+\sum_{i=1}^{s-2}\delta_3^{(s-i)}&\leq -\gamma\beta^22^{-2s^2}\left(1-2^{1+2s-2s^2}-\sum_{i=1}^{s-2}2^{1+2(s-i)-2s^2}\right) \\
&<-\gamma\beta^2 2^{-2s^2}\left(1-2^{-3}\right),
\end{align*}
for the argument of the third $\exp_q$, we have
\[
\delta_1^{(s-j+1)}+(1-2^{1-(s-j+1)})\delta_3^{(s-j)}-2^{1-(s-j+1)}\delta_4^{(s-j)}+\sum_{i=j+1}^{s-2}\delta_3^{(s-i)}<-\gamma\beta 2^{-2s^2}
\]
for every $j=1,\dots,s-2$, for the argument of the fourth $\exp_q$, we have
\begin{align*}
-\delta_2^{(s-j)}+\sum_{i=j+1}^{s-2}\delta_3^{(s-i)} &=-\gamma\beta^22^{-4s^2+2(s-j)}\left(1-2\sum_{i=1}^{s-2-j}2^{-2i}\right) \\
&<-\gamma\beta^22^{-4s^2+4}\left(1-\f{2}{3}\right)
\end{align*}
for every $j=0,\dots,s-2$, and, finally, for the argument of the fifth $\exp_q$, we have
\[
\delta_1^{(s-j)}-2^{2-2(s-j)}\gamma+\sum_{i=j+1}^{s-2}\delta_3^{(s-i)}<-\gamma 2^{2-2s}\left(1-2^{-4}\right)
\]
for every $j=0,\dots,s-2$.

Thus, using the choice~(\ref{deltachoice}) for the $\delta_k^{(\ell)}$'s, we see that there exists $\gamma'>0$ depending only on $P_1,\dots,P_{m_1},Q_1,\dots,Q_{m_2}$ such that
\[
|\Lambda_{P_1,\dots,P_{m_1}}^{Q_1,\dots,Q_{m_2}}(F;\Psi)-\E_zf_{m_1}(z)\Lambda_{P_1,\dots,P_{m_1-1}}^{Q_1,\dots,Q_{m_2}}(f_0,\dots,f_{m_1-1};\Psi)|\ll_{P_1,\dots,P_{m_1},Q_1,\dots,Q_{m_2}}q^{-\gamma'}
\]
whenever the characteristic of $\F_q$ is sufficiently large. Since there exists, by the inductive hypothesis, $\gamma''>0$ depending only on $P_1,\dots,P_{m_1-1},Q_1,\dots,Q_{m_2}$ such that
\[
\Lambda_{P_1,\dots,P_{m_1-1}}^{Q_1,\dots,Q_{m_2}}(f_0,\dots,f_{m_1-1};\Psi)=1_{\Psi=1}\prod_{i=0}^{m_1-1}\E_zf_i(z)+O_{P_1,\dots,P_{m_1-1},Q_1,\dots,Q_{m_2}}(q^{-\gamma''})
\]
whenever the characteristic of $\F_q$ is sufficiently large, this completes the proof of the theorem.
\end{proof}

\section{Proof of Lemma~\ref{ind}}\label{lemmas}
As mentioned in the previous section, instead of using the inverse theorem for the $U^s$-norm in place of~(\ref{u2}) in the proof of Lemma~\ref{ind} and then needing to deal with nilsequences, we use the following lemma to return to the $U^2$ situation. To avoid potential confusion while reading the lemma, note that the function $F_{h_1,\dots,h_s}$ defined below is not the same as $\Delta_{h_1,\dots,h_s}F$.
\begin{lemma}\label{cs}
Let $f_1,\dots,f_m:\F_q^2\to\C$ be $1$-bounded. Set
\[
F(x):=\E_y\prod_{i=1}^mf_i(x,y)
\]
and, for every $h_1,\dots,h_t\in\F_q$, set
\[
F_{h_1,\dots,h_t}(x):=\E_y\prod_{i=1}^m\Delta_{h_1,\dots,h_t}^{(1)}f_i(x,y).
\]
Then $\|F\|_{U^s}^{2^{2s-2}}\leq \E_{h_1,\dots,h_{s-2}}\|F_{h_1,\dots,h_{s-2}}\|_{U^2}^4$ for all $s\geq 2$.
\end{lemma}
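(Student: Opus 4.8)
I will prove the a priori stronger bound
\[
\|F\|_{U^s}^{2^{s}}\leq\E_{h_1,\dots,h_{s-2}}\|F_{h_1,\dots,h_{s-2}}\|_{U^2}^4 ,
\]
from which Lemma~\ref{cs} is immediate: $F$ is $1$-bounded, so $\|F\|_{U^s}\leq 1$, and since $2^{2s-2}\geq 2^{s}$ for $s\geq 2$ this gives $\|F\|_{U^s}^{2^{2s-2}}\leq\|F\|_{U^s}^{2^{s}}$. It is convenient to allow an arbitrary finite index set $Y$ in place of $\F_q$ in the $y$-average and to work with $\Phi(x):=\E_{y\in Y}g(x,y)$ and $\Phi_h(x):=\E_{y\in Y}\Delta_h^{(1)}g(x,y)$ for an arbitrary $1$-bounded $g:\F_q\times Y\to\C$; the function $F$ and the functions $F_{h_1,\dots,h_t}$ are of exactly this shape with $g=\prod_{i=1}^m f_i$, because $\Delta_{h_1,\dots,h_t}^{(1)}$ distributes over products (and over conjugation), so $\prod_i\Delta_{h_1,\dots,h_t}^{(1)}f_i=\Delta_{h_1,\dots,h_t}^{(1)}\!\bigl(\prod_i f_i\bigr)$.

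\textbf{Base-level estimate.} The heart of the matter is the following $U^2$-bound, proved by Fourier analysis: for every $1$-bounded $g$,
\[
\|\Phi\|_{U^2}^4\leq\E_h\,\|\Phi_h\|_{U^1}^2=\E_h\,\bigl|\E_x\Phi_h(x)\bigr|^2 .
\]
Using~(\ref{u2}) I expand $\|\Phi\|_{U^2}^4=\sum_{\psi\in\widehat{\F}_q}|\widehat\Phi(\psi)|^4$ and write $\widehat\Phi(\psi)=\E_y\widehat{g_y}(\psi)$ with $g_y:=g(\cdot,y)$; applying Cauchy--Schwarz and then expanding the resulting square gives $\|\Phi\|_{U^2}^4\leq\E_{y,y'}\sum_\psi|\widehat{g_y}(\psi)|^2|\widehat{g_{y'}}(\psi)|^2$. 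Since $|\widehat a(\psi)|^2=\widehat{\alpha_a}(\psi)$ for the autocorrelation $\alpha_a(h):=\E_x a(x+h)\overline{a(x)}$, Parseval's identity turns the inner sum into $\E_h\alpha_{g_y}(h)\overline{\alpha_{g_{y'}}(h)}$; pulling the average over $y,y'$ inside then yields $\E_h|\E_y\alpha_{g_y}(h)|^2$, and $\E_y\alpha_{g_y}(h)=\E_x\E_y\Delta_h^{(1)}g(x,y)=\E_x\Phi_h(x)$, which is the claim.

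\textbf{Bootstrapping and iteration.} Next I would prove, by induction on $k$, that for every $k\geq 2$ and every $1$-bounded $g$,
\[
\|\Phi\|_{U^k}^{2^k}\leq\E_h\,\|\Phi_h\|_{U^{k-1}}^{2^{k-1}} ,
\]
the case $k=2$ being the estimate just proved (recall $\|f\|_{U^1}^{2}=|\E_x f(x)|^{2}$). For the step I use the standard recursion $\|\Phi\|_{U^k}^{2^k}=\E_{h'}\|\Delta_{h'}\Phi\|_{U^{k-1}}^{2^{k-1}}$ and observe that $\Delta_{h'}\Phi(x)=\E_{(y,y')\in Y^2}g(x+h',y)\overline{g(x,y')}$ is again of the form $\E_z\widetilde g(x,z)$ for a $1$-bounded $\widetilde g$ on $\F_q\times Y^2$; applying the inductive hypothesis at level $k-1$ to it and checking, via the same distributivity of $\Delta_h^{(1)}$, that the associated function equals $\Delta_{h'}(\Phi_h)$, I obtain $\|\Delta_{h'}\Phi\|_{U^{k-1}}^{2^{k-1}}\leq\E_h\|\Delta_{h'}\Phi_h\|_{U^{k-2}}^{2^{k-2}}$; averaging over $h'$ and invoking the recursion once more closes the induction. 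Iterating this bootstrap $s-2$ times, starting from $\|F\|_{U^s}^{2^s}$ and using at each stage that the intermediate function $F_{h_1,\dots,h_j}$ is once more an average over $Y$ of a $1$-bounded function of $x$, produces the displayed bound $\|F\|_{U^s}^{2^s}\leq\E_{h_1,\dots,h_{s-2}}\|F_{h_1,\dots,h_{s-2}}\|_{U^2}^4$, hence the lemma.

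\textbf{Where the work is.} Everything apart from the base-level estimate is formal: it rests only on $\|f\|_{U^k}^{2^k}=\E_h\|\Delta_h f\|_{U^{k-1}}^{2^{k-1}}$ and on $\|f\|_{U^k}\leq 1$ for $1$-bounded $f$. The point one must handle carefully --- and, as flagged just before the statement of the lemma, the reason $F_{h_1,\dots,h_s}$ rather than $\Delta_{h_1,\dots,h_s}F$ is the right object --- is the decoupling/recoupling of the $y$-variable: $\Delta_{h'}\Phi$ carries two independent copies of the index variable, whereas $\Phi_h$ keeps a single coupled copy. The base-level $U^2$ estimate is exactly the device that converts the decoupled average back to the coupled one at the price of one extra average over $h$, while the identity $(\Delta_{h'}\Phi)_h=\Delta_{h'}(\Phi_h)$ guarantees that the output is again of the ``average over $y$ of a $1$-bounded function'' shape, so that the bootstrap can be pushed all the way down to $U^2$. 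I expect the only real obstacle to be getting the Fourier identities in the base case exactly right; the surrounding induction is routine.
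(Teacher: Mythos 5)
Your argument is correct, and it in fact proves more than Lemma~\ref{cs} asserts: you obtain the lossless estimate $\|F\|_{U^s}^{2^s}\leq\E_{h_1,\dots,h_{s-2}}\|F_{h_1,\dots,h_{s-2}}\|_{U^2}^4$, and the stated exponent $2^{2s-2}$ then follows simply from $\|F\|_{U^s}\leq 1$. The route is genuinely different from the paper's. The paper expands $\|F\|_{U^s}^{2^s}$ with a decoupled variable $y_\omega$ for each $\omega\in\{0,1\}^s$, splits the product according to $\omega_s$, and applies one Cauchy--Schwarz in $x,y,h$; this recouples the $y$-variable but squares the left-hand side, giving only $\|F\|_{U^s}^{2^{s+1}}\leq\E_h\|F_h\|_{U^{s-1}}^{2^{s-1}}$ for $s\geq 3$, and the ensuing induction needs an extra Jensen/power step, which is where $2^{2s-2}$ comes from. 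You instead perform the recoupling at the $U^2$ level by Fourier analysis: the pointwise bound $|\E_y\widehat{g_y}(\psi)|^4\leq(\E_y|\widehat{g_y}(\psi)|^2)^2$, the identity $|\widehat{g_y}(\psi)|^2=\widehat{\alpha_{g_y}}(\psi)$ for the autocorrelation $\alpha_{g_y}(h)=\E_x g(x+h,y)\overline{g(x,y)}$, and Parseval give $\|\Phi\|_{U^2}^4\leq\E_h\|\Phi_h\|_{U^1}^2$ with no loss, and you transport this upward through the recursion $\|\Phi\|_{U^k}^{2^k}=\E_{h'}\|\Delta_{h'}\Phi\|_{U^{k-1}}^{2^{k-1}}$ via the identity $(\Delta_{h'}\Phi)_h=\Delta_{h'}(\Phi_h)$; I checked both the base computation and this identity, and your insistence on allowing an arbitrary finite index set $Y$ (so that $Y^2$ is admissible after differencing in $x$) is exactly what makes the induction legitimate. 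In exchange for invoking Fourier inversion/Parseval and this mild generalization of the statement, you get the sharper exponent $2^s$, which is aesthetically cleaner but immaterial for the application since all functions involved are $1$-bounded; the paper's single physical-space Cauchy--Schwarz is shorter, avoids Fourier analysis entirely, and its exponent loss costs nothing downstream.
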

\begin{proof}[Proof of Lemma~\ref{cs}]
This is proved by repeated applications of the Cauchy--Schwarz inequality. The result is trivial when $s=2$. We will first show that $\|F\|_{U^s}^{2^{s+1}}\leq\E_h\|F_h\|_{U^{s-1}}^{2^{s-1}}$ for all $s\geq 3$. The conclusion of the lemma will then follow easily by induction.

Denote the complex conjugation map by $C:\C\to\C$, so that $Cz=\bar{z}$. By definition, we have
\begin{align*}
\|F\|_{U^s}^{2^s} &= \E_{x\in\F_q,h\in\F_q^s}\prod_{\omega\in\{0,1\}^s}C^{|\omega|}F(x+h\cdot\omega) \\
&= \E_{\substack{x\in\F_q,y\in\F_q^{\{0,1\}^s}\\h\in\F_q^s}}\prod_{\omega\in\{0,1\}^s}\prod_{i=1}^mf_i(x+h\cdot\omega,y_\omega),
\end{align*}
which, splitting the product over $\omega\in\{0,1\}^s$ up based on the value of $\omega_s$, can be written as
\[
\E_{\substack{x\in\F_q,y\in\F_q^{\{0,1\}^s} \\ h\in\F_q^{s-1}\times\{0\}}}\prod_{\substack{\omega\in\{0,1\}^s \\ \omega_s=0}}\prod_{i=1}^mC^{|\omega|}f_i(x+h\cdot\omega,y_\omega)\E_{h_s}\prod_{\substack{\omega\in\{0,1\}^s \\ \omega_s=1}}\prod_{i=1}^mC^{|\omega|}f_i(x+h\cdot\omega+h_s,y_\omega).
\]
Now we apply Cauchy--Schwarz in $x,y,$ and $h$ and use the $1$-boundedness of the $f_i$ to get that $\|F\|_{U^s}^{2^{s+1}}$ is bounded above by
\[
\E_{\substack{x\in\F_q,y\in\F_q^{\{0,1\}^s} \\ h\in\F_q^{s-1}\times\{0\}}}\E_{h_s,h_s'}\prod_{\substack{\omega\in\{0,1\}^s \\ \omega_s=1}}\prod_{i=1}^mC^{|\omega|-1}f_i(x+h\cdot\omega+h_s,y_\omega)\overline{f_i(x+h\cdot\omega+h_s',y_\omega)}.
\]
Making the change of variables $x\mapsto x-h_s',h_s\mapsto h_s+h_s'$ and noting that the value of $\prod_{\substack{\omega\in\{0,1\}^s \\ \omega_s=1}}\prod_{i=1}^mC^{|\omega|-1}\Delta_{h_s}^{(1)}f_i(x+h\cdot\omega,y_\omega)$ does not depend on $y_\omega$ whenever $\omega_s=0$, we see that the above equals
\[
\E_{h_s}\E_{\substack{x\in\F_q, y\in\F_q^{\{0,1\}^{s-1}} \\ h\in\F_q^{s-1}}}\prod_{\omega\in\{0,1\}^{s-1}}\prod_{i=1}^mC^{|\omega|}\Delta_{h_s}^{(1)}f_i(x+h\cdot\omega,y_\omega),
\]
which is exactly $\E_{h_s}\|F_{h_s}\|_{U^{s-1}}^{2^{s-1}}$.

Now, since $\Delta_h^{(1)}f$ is $1$-bounded whenever $f$ is, it follows by induction that $\|F\|_{U^s}^{2^{2s-2}}\leq \E_{h_1,\dots,h_{s-2}}\|F_{h_1,\dots,h_{s-2}}\|_{U^2}^4$ for all $s\geq 3$. Indeed, we have just shown that $\|F\|_{U^3}^{2^4}\leq\E_{h_1}\|F_{h_1}\|_{U^2}^4$, and if $\|F\|_{U^t}^{2^{2t-2}}\leq \E_{h_1,\dots,h_{t-2}}\|F_{h_1,\dots,h_{t-2}}\|_{U^2}^4$ for $t\geq 3$, then
\begin{align*}
\|F\|_{U^{t+1}}^{2^{2(t+1)-2}} &= (\|F\|_{U^{t+1}}^{2^{(t+1)+1}})^{2^{t-2}} \\
&\leq (\E_{h_1}\|F_{h_1}\|_{U^t}^{2^{t}})^{2^{t-2}} \\
&\leq \E_{h_1}\|F_{h_1}\|_{U^t}^{2^{2t-2}} \\
&\leq \E_{h_1,\dots,h_{(t+1)-2}}\|F_{h_1,\dots,h_{(t+1)-2}}\|_{U^2}^4
\end{align*}
since $(F_h)_k=F_{h,k}$ for every $h,k\in\F_q$.
\end{proof}

The proof of Lemma~\ref{ind} essentially follows the argument given at the beginning of Section~\ref{pf}, but is done in greater generality using Lemma~\ref{cs}. The argument at the beginning of Section~\ref{pf} was successful due to the linear independence of $P_1$ and $P_2$, which implied that of $P_1$ and $P_2-P_1$ as well. Similarly, the key to the following proof is the linear independence of $P_1,\dots,P_{m_1},Q_1,\dots,Q_{m_2}$, which will imply the linear independence of other related collections of polynomials $R_1,\dots,R_{m_1-1},S_1,\dots,S_{m_2+1}$ that we can apply the lemma's hypothesis to.

\begin{proof}[Proof of Lemma~\ref{ind}]
By hypothesis, there exist $c_1',c_2,\gamma'>0$ such that if $F=(f_0,\dots,f_{m_1})$ and $G=(g_0,\dots,g_{m_1-1})$ are $1$-bounded, $\Psi\in(\widehat{\F}_q)^{m_2}$, $\Phi\in(\widehat{\F}_q)^{m_2+1}$, and $\F_q$ has characteristic at least $\max(c_1',b_4)$, then
\begin{equation}\label{bus}
|\Lambda_{P_1,\dots,P_{m_1}}^{Q_1,\dots,Q_{m_2}}(F;\Psi)|\leq b_1\min_j\|f_j\|_{U^s}^{b_2}+b_3
\end{equation}
and
\begin{equation}\label{m12}
\big|\Lambda_{R_1,\dots,R_{m_1-1}}^{S_1,\dots,S_{m_2+1}}(G;\Phi)-1_{\Phi=1}\prod_{i=0}^{m_1-1}\E_xg_i(x)\big|\leq\f{c_2}{q^{\gamma'}},
\end{equation}
whenever
\[
R_i=\begin{cases} P_i-P_k & i\leq k-1 \\ P_{i+1}-P_k & i\geq k\end{cases} \text{ and } S_j=\begin{cases} Q_j & j\leq m_2 \\ P_k & j=m_2+1 \end{cases}
\]
for $k=1$ or $2$, or
\[
R_i=\begin{cases} -P_2 & i=1 \\ P_{i+1}-P_2 & i\geq 2\end{cases}  \text{ and } S_j=\begin{cases} Q_j & j\leq m_2 \\ -P_1+P_2 & j=m_2+1 \end{cases}.
\]
Indeed, note that $R_1,\dots,R_{m_1-1},S_1,\dots,S_{m_2+1}$ are linearly independent in each of these three cases since $P_1,\dots,P_{m_1},Q_1,\dots,Q_{m_2}$ are linearly independent.

Let $\delta_1,\delta_2,\delta_3,\delta_4>0$. Assume that the characteristic of $\F_q$ is at least $\max(c_1',b_4)$ and that $q^{\delta_2-\delta_3}+q^{\delta_4-\delta_1}\leq 1/2$. By Proposition~\ref{hb}, we can write
\[
f_0=f_a+f_b+f_c
\]
for some $f_a,f_b,f_c:\F_q\to\C$ with $\|f_a\|_{U^s}^*\leq q^{\delta_1}$, $\|f_b\|_{L^1}\leq q^{-\delta_2}$, $\|f_c\|_{L^\infty}\leq q^{\delta_3}$, and $\|f_c\|_{U^s}\leq q^{-\delta_4}$. Set $F_a:=(f_a,f_1,\dots,f_{m_1})$, $F_b:=(f_b,f_1,\dots,f_{m_1})$, $F_c:=(f_c,f_1,\dots,f_{m_1})$, and
\[
g(x):=\E_y\prod_{i=1}^{m_1}f_i(x+P_i(y))\prod_{j=1}^{m_2}\psi_j(Q_j(y)).
\]
Then
\[
\Lambda_{P_1,\dots,P_{m_1}}^{Q_1,\dots,Q_{m_2}}(F;\Psi)=\Lambda_{P_1,\dots,P_{m_1}}^{Q_1,\dots,Q_{m_2}}(F_a;\Psi)+\Lambda_{P_1,\dots,P_{m_1}}^{Q_1,\dots,Q_{m_2}}(F_b;\Psi)+\Lambda_{P_1,\dots,P_{m_1}}^{Q_1,\dots,Q_{m_2}}(F_c;\Psi)
\]
and $\Lambda_{P_1,\dots,P_{m_1}}^{Q_1,\dots,Q_{m_2}}(F_a;\Psi)=\E_xf_a(x)g(x)$.

As in the discussion at the beginning of Section~\ref{pf}, the term $\Lambda_{P_1,\dots,P_{m_1}}^{Q_1,\dots,Q_{m_2}}(F_b;\Psi)$ is easy to bound using the triangle inequality and the fact that $f_1,\dots,f_{m_1}$ are all $1$-bounded. Indeed, 
\[
\Lambda_{P_1,\dots,P_{m_1}}^{Q_1,\dots,Q_{m_2}}(F_b;\Psi)\leq \E_x|f_b(x)|\prod_{i=1}^{m_1}|f_i(x+P_i(y))|\prod_{j=1}^{m_2}|\psi_j(Q_j(y))|\leq \|f_b\|_{L^1}\leq q^{-\delta_2}.
\]

To bound $|\Lambda_{P_1,\dots,P_{m_1}}^{Q_1,\dots,Q_{m_2}}(F_c;\Psi)|$, note that $q^{-\delta_3}f_c,f_1,\dots,f_{m_1}$ are all $1$-bounded, so that by~(\ref{bus}) we have
\begin{align*}
|\Lambda_{P_1,\dots,P_{m_1}}^{Q_1,\dots,Q_{m_2}}(F_c;\Psi)| &= q^{\delta_3}|\Lambda_{P_1,\dots,P_{m_1}}^{Q_1,\dots,Q_{m_2}}(q^{-\delta_3}f_c,f_1,\dots,f_{m_1};\Psi)| \\ 
&\leq q^{\delta_3}(b_1\|q^{-\delta_3}f_c\|_{U^s}^{b_2}+b_3) \\
&\leq q^{(1-b_2)\delta_3-b_2\delta_4}b_1+q^{\delta_3}b_3,
\end{align*}
using the bound $\|f_c\|_{U^s}\leq q^{-\delta_4}$.

Finally, to bound $\Lambda_{P_1,\dots,P_{m_1}}^{Q_1,\dots,Q_{m_2}}(F_a;\Psi)$, we use that $\Lambda_{P_1,\dots,P_{m_1}}^{Q_1,\dots,Q_{m_2}}(F_a;\Psi)=\langle f_a,\overline{g}\rangle$ and
\[
|\langle f_a,g\rangle|\leq\|f_a\|_{U^s}^*\|g\|_{U^s}\leq q^{\delta_1}\|g\|_{U^s},
\]
so that it remains to bound $\|g\|_{U^s}$.

Since $f_1,\dots,f_{m_1},\psi_1,\dots,\psi_{m_2}$ are all $1$-bounded, we have by Lemma~\ref{cs} that
\[
\|g\|_{U^s}^{2^{2s-2}}\leq\E_{h_1,\dots,h_{s-2}}\|g_{h_1,\dots,h_{s-2}}\|_{U^2}^4.
\]
Let $\phi_{m_2+1}\in\widehat{\F}_q$ and, for each $j=1,\dots,m_2$, set $\phi_j=\psi_j$ if $s=2$ and $\phi_j=1$ otherwise. Then
\[
g_{h_1,\dots,h_{s-2}}(x)=\E_y\prod_{i=1}^{m_1}\Delta_{h_1,\dots,h_{s-2}}f_i(x+P_i(y))\prod_{j=1}^{m_2}\phi_j(Q_j(y))
\]
and, for each $k=1,\dots,m_1$, we have that $\widehat{g_{h_1,\dots,h_{s-2}}}(\phi_{m_2+1})$ equals
\begin{align*}
&\E_{x,y}\overline{\phi_{m_2+1}(x)}\prod_{i=1}^{m_1}\Delta_{h_1,\dots,h_{s-2}}f_i(x+P_i(y))\prod_{j=1}^{m_2}\phi_j(Q_j(y)) \\
&= \E_{x,y}\overline{\phi_{m_2+1}(x-P_k(y))}\prod_{i=1}^{m_1}\Delta_{h_1,\dots,h_{s-2}}f_i(x+P_i(y)-P_k(y))\prod_{j=1}^{m_2}\phi_j(Q_j(y)) \\
&= \Lambda_{R_1,\dots,R_{m_1-1}}^{S_1,\dots,S_{m_2+1}}(g_0,\dots,g_{m_1-1};\phi_1,\dots,\phi_{m_2+1}),
\end{align*}
where
\[
R_i=\begin{cases} P_i-P_k & i\leq k-1 \\ P_{i+1}-P_k & i\geq k\end{cases}, S_j=\begin{cases} Q_j & j\leq m_2 \\ P_k & j=m_2+1 \end{cases},
\]
and
\[
g_i=\begin{cases} \overline{\phi_{m_2+1}}\Delta_{h_1,\dots,h_{s-2}}f_k & i=0 \\ \Delta_{h_1,\dots,h_{s-2}}f_i & i\leq k-1 \\ \Delta_{h_1,\dots,h_{s-2}}f_{i+1} & i\geq k \end{cases}.
\]

Note that the $g_i$'s are $1$-bounded since $\phi_{m_2+1}$ and $f_1,\dots,f_{m_2}$ are $1$-bounded. We can thus apply the estimate~(\ref{m12}) when $k=1$ and $2$ to get that
\[
|\widehat{g_{h_1,\dots,h_{s-2}}}(\phi_{m_2+1})-1_{\Phi=1}\prod_{i=0}^{m_1-1}\E_zg_i(z)|\leq\f{c_2}{q^{\gamma'}}
\]
for every $h_1,\dots,h_{s-2}\in\F_q$. When $k=1$, the above estimate tells us that 
\[
|\widehat{g_{h_1,\dots,h_{s-2}}}(\phi_{m_2+1})|\leq \min_{i\geq 2}|\E_z\Delta_{h_1,\dots,h_{s-2}}f_i(z)|+\f{c_2}{q^{\gamma'}}
\]
since each $g_i$ is $1$-bounded and $g_i=\Delta_{h_1,\dots,h_{s-2}}f_{i+1}$ for $i\geq 1$. Similarly, when $k=2$, the above also tells us that
\[
|\widehat{g_{h_1,\dots,h_{s-2}}}(\phi_{m_2+1})|\leq |\E_z\Delta_{h_1,\dots,h_{s-2}}f_1(z)|+\f{c_2}{q^{\gamma'}}.
\]
Thus, since $\|g_{h_1,\dots,h_{s-2}}\|_{U^2}^4\leq\max_{\phi\in\widehat{\F}_q}|\widehat{g_{h_1,\dots,h_{s-2}}}(\phi)|^2$ by~(\ref{u2}) and the $1$-boundedness of $g_{h_1,\dots,h_{s-2}}$, we have
\begin{align*}
\|g\|_{U^s}^{2^{2s-2}} &\leq \E_{h_1,\dots,h_{s-2}}\max_{\phi\in\widehat{\F}_q}|\widehat{g_{h_1,\dots,h_{s-2}}}(\phi)|^2 \\
&\leq \E_{h_1,\dots,h_{s-2}}\min_{i\geq 1}\|\Delta_{h_1,\dots,h_{s-2}}f_i\|_{U^1}^2+\f{2c_2+c_2^2}{q^{\gamma'}} \\
&=\min_{i\geq 1}\E_{h_1,\dots,h_{s-2}}\E_{x,h_{s-1}}\Delta_{h_1,\dots,h_{s-2}}f_i(x+h_{s-1})\overline{\Delta_{h_1,\dots,h_{s-2}}f_i(x)}+\f{2c_2+c_2^2}{q^{\gamma'}} \\
&=\min_{i\geq 1}\E_{x,h_1,\dots,h_{s-1}}\Delta_{h_1,\dots,h_{s-1}}f_i(x)+\f{2c_2+c_2^2}{q^{\gamma'}} \\
&=\min_{i\geq 1}\|f_i\|_{U^{s-1}}^{2^{s-1}}+\f{2c_2+c_2^2}{q^{\gamma'}}.
\end{align*}
Using the fact that $x_1^{1/2}+x_2^{1/2}\geq (x_1+x_2)^{1/2}$ whenever $x_1,x_2>0$, the above implies that
\[
\|g\|_{U^s}\leq\min_{i\geq 1}\|f_i\|_{U^{s-1}}^{2^{1-s}}+\left(\f{2c_2+c_2^2}{q^{\gamma'}}\right)^{2^{2-2s}}.
\]

Setting $c_2':=2c_2+c_2^2$, we conclude that
\[
|\Lambda_{P_1,\dots,P_{m_1}}^{Q_1,\dots,Q_{m_2}}(F;\Psi)|\leq q^{\delta_1}\min_{i\geq 1}\|f_i\|_{U^{s-1}}^{2^{1-s}}+q^{\delta_1}\left(\f{c_2'}{q^{\gamma'}}\right)^{2^{2-2s}}+q^{-\delta_2}+q^{(1-b_2)\delta_3-b_2\delta_4}b_1+q^{\delta_3}b_3.
\]

To replace the $\min_{i\geq 1}$ with a $\min_{i\geq 0}$ here, we note that the same argument can be run by decomposing $f_1$ instead of $f_0$ using Proposition~\ref{hb}. Then the estimate~(\ref{m12}) for the third choice of $R_i$'s and $S_j$'s becomes relevant, and by the same argument one shows that $|\Lambda_{P_1,\dots,P_{m_1}}^{Q_1,\dots,Q_{m_2}}(F;\Psi)|$ is bounded above by
\[
q^{\delta_1}\|f_0\|_{U^{s-1}}^{2^{1-s}}+q^{\delta_1}\left(\f{c_2'}{q^{\gamma'}}\right)^{2^{2-2s}}+q^{-\delta_2}+q^{(1-b_2)\delta_3-b_2\delta_4}b_1+q^{\delta_3}b_3
\]
as well.
\end{proof}

\bibliographystyle{plain}
\bibliography{bib}

\end{document}